\documentclass[a4paper,11pt]{article}
\usepackage{fullpage}

\usepackage{hyperref}
\hypersetup{
    colorlinks,
    citecolor=green,
    filecolor=black,
    linkcolor=blue,
    urlcolor=blue
}
\hypersetup{linktocpage}

\hypersetup{linktocpage}

\usepackage{cite}
\usepackage{graphicx}

\usepackage{amsbsy}
\usepackage{latexsym}
\usepackage{amsfonts}
\usepackage{amssymb}
\usepackage[usenames]{color}
\usepackage{amsmath,amsthm}
\usepackage{enumerate}
\usepackage{tikz}
\usetikzlibrary{arrows}
\usepackage{mathdots}
\usepackage{mathtools}

\usepackage{stmaryrd}
\usepackage{dsfont}
\usepackage{color}

\usepackage{authblk}
\usepackage{graphics}
\usepackage{graphicx}
\usepackage{stmaryrd}
\usepackage{dsfont}

\DeclareMathOperator{\linspan}{span}

\newtheorem{theorem}{Theorem}[section]
\newtheorem{lemma}[theorem]{Lemma}

\newtheorem{cor}[theorem]{Corollary}

\theoremstyle{definition}

\newtheorem{remark}[theorem]{Remark}

\newcommand{\Chi}{\raise .3ex
\hbox{\large $\chi$}}

\newcommand{\R}{\mathbb{R}}

\newcommand{\N}{\mathbb{N}}

\newcommand\dist{\mathop{\rm dist}}

\usepackage[section]{algorithm}
\usepackage{algorithmicx}
\usepackage{algpseudocode}
\algrenewcommand\algorithmicrequire{\makebox[46pt][l]{\textrm{required:}}}
\algrenewcommand\algorithmicensure{\makebox[46pt][l]{\textrm{output:}}}
\algrenewcommand\algorithmicfunction{\textrm{function}}
\algrenewcommand\algorithmicwhile{\textrm{while}}
\algrenewcommand\algorithmicdo{}
\algrenewcommand\algorithmicend{\textrm{end}}
\algrenewcommand\algorithmicforall{\textrm{for all}}
\algrenewcommand\algorithmicfor{\textrm{for}}
\algrenewcommand\algorithmicrepeat{\textrm{repeat}}
\algrenewcommand\algorithmicuntil{\textrm{until}}
\algrenewcommand\algorithmicif{\textrm{if}}
\algrenewcommand\algorithmicthen{\textrm{then}}
\algrenewcommand\algorithmicelse{\textrm{else}}

\usepackage{todonotes}

\newcommand{\be}{\begin{equation}}
\newcommand{\ee}{\end{equation}}
\newcommand{\beq}{\begin{eqnarray}}
\newcommand{\beqq}{\begin{eqnarray*}}
\newcommand{\eeq}{\end{eqnarray}}
\newcommand{\eeqq}{\end{eqnarray*}}

\numberwithin{equation}{section}

\title{Greedy Algorithms and Kolmogorov Widths in Banach Spaces}

\author[a,b]{Van Kien Nguyen\thanks{E-mail: vnguyen@ins.uni-bonn.de,\ kiennv@utc.edu.vn}}

\date{\today}

\begin{document}
 
\maketitle
\begin{abstract}  Let $X$ be a Banach space and $\mathcal{K}$ be a compact subset in $X$. We consider a greedy algorithm for finding an $n$-dimensional subspace $V_n\subset X$ which can be used to approximate the elements of $\mathcal{K}$. We are interested in how well the space $V_n$ approximates the elements of $\mathcal{K}$. For this purpose we compare the performance of greedy algorithm measured by $\sigma_n(\mathcal{K})_X:=\dist(\mathcal{K},V_n)_X$ with the Kolmogorov width $d_n(\mathcal{K})_X$ which is the best possible error one can achieve when approximating $\mathcal{K}$ by $n$-dimensional subspaces. Various results in this direction have been given, e.g., in Binev et al. (SIAM J. Math. Anal. (2011)), DeVore et al. (Constr. Approx. (2013)) and Wojtaszczyk (J. Math. Anal. Appl. (2015)). The purpose of the present paper is to continue this line. We shall show that there exists a constant $C>0$ such that 
\beqq 
\sigma_n(\mathcal{K})_X\leq  C n^{-s+\mu}\big(\log(n+2)\big)^{\min(s,1/2)}, \quad  \ n\geq 1\,, 
\eeqq	
if Kolmogorov widths $d_n(\mathcal{K})_X$ decay  as $n^{-s}$ and the Banach-Mazur distance between an arbitrary $n$-dimensional subspace  $V_n \subset X$ and $\ell_2^n$ satisfies $d(V_n,\ell_2^n)\leq C_1 n^\mu$. In particular, when some additional information about the set $\mathcal{K}$ is given then there is no logarithmic factor in this estimate.   
\end{abstract}
{\bf \qquad Key words:} Greedy algorithms, Kolmogorov widths, compactness
\section{Introduction}
Recently, a new greedy algorithm for obtaining a good subspace $V_n$ of $n$-dimension to  approximate elements of a compact set $\mathcal{K}$ in a Banach space $X$ has been given. This greedy algorithm was studied initially when $X$ is a Hilbert space in the context of reduced basis methods for solving  families of PDEs, see \cite{May1,May2}. Later, it was studied extensively not only in the setting of Hilbert spaces, let us mention, for instance, Binev et al. \cite{Biet}, Buffa et al. \cite{Buet}, DeVore et al. \cite{DePeVo}, and Wojtaszczyk \cite{Woj}.   The greedy algorithm for generating the subspace $V_n$ to approximate elements of $\mathcal{K}$ is implemented as follows. We first select $f_0$ such that
\beqq
\|f_0\|_X =\max_{f\in \mathcal{K}}\|f\|_X \,.
\eeqq
Since $\mathcal{K}$ is compact, such a $f_0$ always exists. At the general step, assuming that $\{f_0,\ldots,f_{n-1} \}$ and $V_{n}=\linspan\{f_0,\ldots,f_{n-1} \}$ have been chosen, then we take $f_n$ such that 
\beqq
\dist (f_n,V_{n})_X=\max_{f\in \mathcal{K}}\dist(f,V_{n})_X\,.
\eeqq
The error in approximating the elements of $\mathcal{K}$ by $V_n$ is defined as 
\be \label{dis}
\sigma_0(\mathcal{K})_X:= \|f_0\|_X\,\qquad\text{and}\qquad \sigma_n(\mathcal{K})_X:=\dist (f_n,V_{n})_X
\ee
for $n\geq 1$. The sequence $ \{\sigma_n(\mathcal{K})_X \}_{n\geq 0}$ is monotonically non-increasing. It is important to note that the sequence $\{f_n\}_{n\geq 0}$ and also $ \{\sigma_n(\mathcal{K})_X \}_{n\geq 0}$ are not unique.

\smallskip
 Let us mention that the best possible error one can achieve when approximating the elements of $\mathcal{K}$ by $n$-dimensional subspaces is the Kolmogorov width $d_n(\mathcal{K})_X$, which is given by
\beqq
d_n(\mathcal{K})_X:= \inf_{L}\sup_{f\in \mathcal{K}}\dist(f,L)_X\,, \qquad n\geq 1,
\eeqq
where the infimum is taken over all $n$-dimensional subspaces $L$ of $X$. We also put $$d_0(\mathcal{K})_X=\max_{f\in \mathcal{K}}\|f\|_X.$$ We would like to emphasize that in practice, finding subspaces which give this performance is out of reach.

\smallskip
 We are interested in how well the subspaces created by the greedy algorithm approximate the elements of $\mathcal{K}$. For this purpose it is natural to compare $\sigma_n(\mathcal{K})_X$ with the Kolmogorov width $d_n(\mathcal{K})_{X}$. Various comparisons between $\sigma_n(\mathcal{K})_X$ and $d_n(\mathcal{K})_X$ have been made. The first attempt in this direction was given in \cite{Buet}, where the authors considered the case when $X$ is a Hilbert space $H$. Under this assumption, it has been shown that
\beqq
\sigma_n(\mathcal{K})_H \leq C2^nd_n(\mathcal{K})_H
\eeqq
for an absolute constant $C$. Observe that this result is useful only   when $d_n(\mathcal{K})_H$ decays faster than $2^{-n}$. A significant improvement of the above result was given in \cite{Biet} where the authors proved that if the Kolmogorov width has polynomial decay with rate $n^{-s}$, then the greedy algorithm also yields  the same rate, i.e., $
\sigma_n(\mathcal{K})_H\leq Cn^{-s}.
$
The estimate of this type was extended  for general Banach spaces $X$ in \cite{DePeVo},  but there is an additional factor $n^{\frac{1}{2}+\varepsilon}$ (for any $\varepsilon>0$) in the rate of decay of $\sigma_n(\mathcal{K})_X$ compared to that of $d_n(\mathcal{K})_X$, that is, 
\be\label{in-0}
\sigma_n(\mathcal{K})_X\leq C n^{-s+\frac{1}{2}+\varepsilon}
\ee
where $d_n(\mathcal{K})_X \leq C_0n^{-s}$ and $C$  depends on $s$, $\varepsilon$. 

\smallskip
For a recent  result in this direction we refer to \cite{Woj} where the author attempted to reduce the loss  $n^{\frac{1}{2}+\varepsilon}$. Let $\tilde{\gamma}_n(X)$ be the supremum of Banach-Mazur distance $d(V_n,\ell_2^n)$, where $V_n$ is any  $n$-dimensional subspace of any quotient space of $X$. If $d_n(\mathcal{K})_X\leq C_0n^{-s}$ and  $\tilde{\gamma}_n(X)\leq C_1n^{\mu}$, then Wojtaszczyk \cite{Woj}  shows that there is a constant $C$ such that
\be \label{in-1}
\sigma_n(\mathcal{K})_X\leq C \bigg(\frac{\log (n+2)}{n}\bigg)^{s} n^{\mu}\,.
\ee  
Observe that the estimate given in \eqref{in-1} improves the result  \eqref{in-0} since $\tilde{\gamma}_n(X)\leq \sqrt{n}$. It has been shown in \cite{Woj} that the above estimate is  optimal  in $L_p$ up to a logarithmic factor. 


\smallskip 
In the present paper we will show that the condition on $\tilde{\gamma}(X)$ can be replaced by  $ \gamma_n(X)=\sup_{V_n} d(V_n,\ell_2^n)$ where the supremum is taken over all $n$-dimensional subspaces $V_n$ in $X$, see Section \ref{sec-2} for the definition. In addition, we improve the power of the logarithmic term in \eqref{in-1}   when $s>1/2$. More precisely, we  prove that there is a constant $C>0$ such that
\beqq 
\sigma_n(\mathcal{K})_X\leq  C n^{-s+\mu}\big(\log(n+2)\big)^{\min(s,1/2)}\,, \qquad \text{for}\ \ n\geq 1\,, 
\eeqq
if  $d_n(\mathcal{K})_X\leq C_1n^{-s}$ and $\gamma_n(X)\leq C_2n^{\mu}$\,.

\smallskip
 Often, the compact set of interest $\mathcal{K}$  is the image (or subset) of the closed unit ball $B_E$ of a Banach space $E$ under a compact operator $T\in \mathcal{L}(E,X)$.  For this reason, we shall compare $\sigma_n(\mathcal{K})_X$ with the Kolmogorov widths $d_n(T(B_E))_X$. In this study, we obtain the estimate  
  \be\label{in-3}
  \sigma_{3n-1}(\mathcal{K})_X \leq 3e^2\, \Gamma_n(E)\Gamma_n(X) \Bigg(\prod_{k=0}^{n-1}  d_{k}(T(B_E))_X  \Bigg)^{1/n}  ,\qquad n\geq 1,
  \ee 
where $\Gamma_n(X)$ is the $n$-Grothendieck number of $X$, which is closely related to $\gamma_n(X)$, see Section \ref{sec-2}. Note that if $E$ is a Hilbert space then $\Gamma_n(E)=1$, $n\in \N$. In Section \ref{sec-3} we will present  examples showing that \eqref{in-3} gives the optimal estimate
for the decay of $\sigma_n(\mathcal{K})_X$.

\smallskip
 The rest of our paper is organized as follows. In the next Section \ref{sec-2} we will collect some required tools. The main results are stated and proved in Section \ref{sec-3}.

\section{Some preparations}\label{sec-2}
In this section we collect some  tools  needed to formulate our results in the next section where Theorem \ref{main-1}  follows closely  \cite[Proposition 2.2]{Woj}. The main idea in proving \cite[Proposition 2.2]{Woj} is that the author translated from estimate $\sigma_n(\mathcal{K})_X=\dist (f_n,V_{n})_X$ in the original norm to the estimate  $\dist (f_n,V_{n})$ in an appropriately chosen Euclidean norm on $\linspan\{V_n,f_n\}$ where the author took full advantage of the orthogonality. The cost of this translation is proportional to the Banach - Mazur distance $d(V_n,\ell_2^n)$. In the next step, the author applied the Hadamard's inequality, and then arithmetic-geometric mean inequality as in \cite{DePeVo} to get the bound for $\sigma_n(\mathcal{K})_X$ in terms of $d_n(\mathcal{K})_X$ and $d(V_n,\ell_2^n)$. This estimate is then passed to quotient spaces to obtain the final result given in \eqref{in-1} where the decay of $\tilde{\gamma}_n(X)$ comes into play. 

\smallskip
 The Banach - Mazur distance of two isomorphic Banach spaces $X$ and $Y$ is
defined by
\beqq
d(X,Y)=\inf\big\{\|T\|\cdot \|T^{-1}\|: \ T: X\to Y \text{ is an isomorphism}\big\} \,.
\eeqq
For a Banach space $X$ we introduce a sequence of numbers
\beqq
\gamma_n(X)=\sup\big\{d(V,\ell_2^n)\,,\ V \ \text{is an } n \text{-dimensional subspace in }X \big\}\,.
\eeqq
The sequence $\{\gamma_n(X)\}_{n\geq 1}$ is non-decreasing and $\gamma_1(X)=1$. It is obvious that if $X$ is a Hilbert space then we have $\gamma_n(X)=1$, $n=1,2,3,\ldots$. In the case of an arbitrary Banach space $X$, it is known that $\gamma_n(X)\leq n^{1/2}$ and $\gamma_n(L_p)\leq n^{|\frac{1}{2}-\frac{1}{p}|}$ for $1\leq p\leq \infty$. We also define a related notion
\beqq
\tilde{\gamma}_n(X)= \sup\big\{\gamma_n(Z): \ Z \text{{ is a quotient space of }}X \big\}\,.
\eeqq
It is clear that $\gamma_n(X)\leq \tilde{\gamma}_n(X)$. Later we will see in the proof of Theorem \ref{thm1} that $\gamma_n(X)\leq C_1 n^{\mu}$ implies $\tilde{\gamma}_n(X)\leq C_2 n^{\mu}$.

\smallskip
Let $X$ and $Y$ be Banach spaces of finite dimension. Then there exists an operator $T: X\to Y$ such that $d(X,Y)=\|T\|\cdot \|T^{-1}\|$. We can additionally assume that $\|T^{-1}\|=1$. Hence a new norm on $X$ defined by $\|x\|_e:=\|Tx\|_Y$ satisfies
\be \label{dist}
\|x\|_X \leq \|x\|_e\leq d(X,Y)\|x\|_X\,.
\ee 
Moreover $T$ is an isometry between $(X,\|\cdot\|_e)$ and $Y$\,.

\smallskip
The local injective distance $\gamma_n(X)$ is closely related to the  so-called Grothendieck numbers. Let $T\in \mathcal{L}(X,Y)$ be a linear bounded operator. The $n$-th Grothendieck number of $T$ is defined as
\beqq
\Gamma_n(T):=\sup \Big\{ \big|\det\big( \langle Tx_i,b_j\rangle\big)\big|^{1/n},\ x_1,\ldots,x_n\in B_X,\ b_1,\ldots, b_n \in B_{Y'} \Big\}\,.
\eeqq
If $T$ is the identity map of $X$ then we write $\Gamma_n(X)$. Let $0\leq \delta \leq 1/2$. A Banach space $X$ is said to be of weak Hilbert type $\delta$ if there exists a constant $C\geq 1$ such that $\Gamma_n(X)\leq C\, n^\delta$ for $n\geq 1.$ We denote the class of these spaces by $\Gamma_\delta$. Note that $
\Gamma_{1/2}$ is the set of all Banach spaces, i.e.,
\beqq
\Gamma_n(X) \leq C\, n^{1/2},\qquad \text{for all } X\,.
\eeqq
In particular we have
$\Gamma_n(L_p)\leq n^{|1/p-1/2|}$ for $1\leq p\leq \infty$. For further properties of this quantity we refer to \cite{Gei90,Pie91}. The relation between $\gamma_n(X)$ and Grothendieck numbers is represented in the following lemma, see, e.g., \cite{Pie91}.
\begin{lemma}\label{gamma} Let $X$ be a Banach space. Then
$$\gamma_n(X)\leq C_1n^{\delta}\qquad \text{ if and only if }\qquad \Gamma_n(X)\leq C_2n^{\delta}$$ for some $C_1, C_2\geq 1$. 
\end{lemma}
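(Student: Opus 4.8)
The plan is to prove the two implications separately; the forward one is elementary, while the reverse carries all the weight.

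\textbf{Forward implication} ($\gamma_n(X)\le C_1 n^\delta\Rightarrow\Gamma_n(X)\le C_1 n^\delta$). I will establish the pointwise bound $\Gamma_n(X)\le\gamma_n(X)$. Fix $x_1,\dots,x_n\in B_X$ and $b_1,\dots,b_n\in B_{X'}$. The determinant $\det(\langle x_i,b_j\rangle)$ vanishes unless the $x_i$ are linearly independent, so I may assume $V:=\linspan\{x_1,\dots,x_n\}$ is $n$-dimensional, and only the restrictions $b_j|_V$ enter. By \eqref{dist} with $Y=\ell_2^n$ there is a Euclidean norm $\|\cdot\|_e$ on $V$ and an isometry $T\colon(V,\|\cdot\|_e)\to\ell_2^n$ with $\|x\|_X\le\|x\|_e\le d(V,\ell_2^n)\|x\|_X\le\gamma_n(X)\|x\|_X$. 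Writing $u_i=Tx_i$ and letting $c_j\in\ell_2^n$ be the Riesz representative of $b_j|_V\circ T^{-1}$, I get $\langle x_i,b_j\rangle=\langle u_i,c_j\rangle_{\ell_2^n}$ with $\|u_i\|_2=\|x_i\|_e\le\gamma_n(X)$ and $\|c_j\|_2=\|b_j|_V\|_{(V,e)'}\le\|b_j\|_{X'}\le 1$, the last inequality coming from the inclusion of unit balls implied by $\|\cdot\|_X\le\|\cdot\|_e$. Then $\det(\langle x_i,b_j\rangle)=\det(U)\det(C)$ with $U=[u_1,\dots,u_n]$, $C=[c_1,\dots,c_n]$, and Hadamard's inequality gives $|\det U|\le\prod_i\|u_i\|_2\le\gamma_n(X)^n$ and $|\det C|\le 1$. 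Taking the supremum yields $\Gamma_n(X)\le\gamma_n(X)$, so this implication holds with $C_2=C_1$.

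\textbf{Reverse implication} ($\Gamma_n(X)\le C_2 n^\delta\Rightarrow\gamma_n(X)\le C_1 n^\delta$). Fix an $n$-dimensional $V\subset X$; I must bound $d(V,\ell_2^n)$. First I remove the ambient space: since every functional on $V$ extends by Hahn--Banach to $X$ with the same norm while $B_V\subset B_X$, we have $\Gamma_n(V)\le\Gamma_n(X)\le C_2 n^\delta$, so it suffices to bound $d(V,\ell_2^n)$ by $\Gamma_n(V)$. Next I interpret $\Gamma_n(V)$ volumetrically: with $K=B_V$ and $K^\circ=B_{V'}$, the same determinant factorization gives $\Gamma_n(V)^n=v(K)\,v(K^\circ)$, where $v(L):=\max\{|\det[y_1,\dots,y_n]|:y_i\in L\}$. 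Since $v(L)/\mathrm{vol}(L)$ is $GL_n$-invariant, $\Gamma_n(V)$ is an affine invariant comparable to a product of volume ratios of $K$ and $K^\circ$ against the John ellipsoid.

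The crux is to bound $d(V,\ell_2^n)$ by $C\,\Gamma_n(V)$ with $C$ absolute. I would place the John (maximal-volume inscribed) ellipsoid of $K$ at $B_2^n$, so that $B_2^n\subseteq K\subseteq\sqrt n\,B_2^n$ and $d(V,\ell_2^n)\le R:=\max_{x\in K}\|x\|_2$. John's theorem supplies contact points $u_i\in\partial K\cap\partial B_2^n$ and weights $c_i>0$ with $\sum_i c_i\,u_i\otimes u_i=\id$; I will exploit this full isotropic decomposition, not a single extremal direction, to extract $n$ contact points spanning a parallelepiped of volume bounded below, and dually to control $v(K^\circ)$, so that a large $R$ is forced to make $\Gamma_n(V)$ large. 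This step must preserve the exponent: a crude choice gives only $d(V,\ell_2^n)\le\Gamma_n(V)^2$, which degrades $\delta$ to $2\delta$, so the selection has to be carried out with a restricted-invertibility or volume-ratio estimate producing the linear bound $d(V,\ell_2^n)\le C\,\Gamma_n(V)$.

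\textbf{Main obstacle.} The reverse inequality $d(V,\ell_2^n)\lesssim\Gamma_n(V)$ \emph{without loss in the exponent} is the heart of the matter and is where the theory of weak Hilbert spaces enters. I expect the delicate point to be converting the isotropic John decomposition into a genuinely large determinant, i.e.\ a lower bound on $v(K)\,v(K^\circ)$ proportional to $R^n$, since naive arguments detect only one long direction at a time while the dual body is short there. Combining this with the forward bound $\Gamma_n(V)\le d(V,\ell_2^n)$ pins $d(V,\ell_2^n)$ to the order of $\Gamma_n(V)$, and taking the supremum over $V$ together with $\Gamma_n(V)\le\Gamma_n(X)\le C_2 n^\delta$ gives $\gamma_n(X)\le C_1 n^\delta$, completing the equivalence.
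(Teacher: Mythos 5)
Your forward implication is correct and complete: the pointwise bound $\Gamma_n(X)\le\gamma_n(X)$ via \eqref{dist}, the Riesz representatives, and Hadamard's inequality is exactly the standard easy half, and the reduction $\Gamma_n(V)\le\Gamma_n(X)$ by Hahn--Banach together with the factorization $\Gamma_n(V)^n=v(K)\,v(K^{\circ})$ is also sound. Note, however, that the paper itself does not prove this lemma at all --- it quotes it from \cite{Pie91} --- so the relevant comparison is with Pietsch's argument, and there the hard direction is not obtained by a fixed-$n$ pointwise comparison of the kind you set up.

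The reverse implication, which carries the entire content of the lemma, has a genuine gap: you reduce everything to the inequality $d(V,\ell_2^n)\le C\,\Gamma_n(V)$ with an absolute constant $C$, and then explicitly leave this as the ``main obstacle'' with only a hoped-for route through John's theorem. That route, as sketched, cannot close. The John contact points $u_i$ lie on the unit sphere, so every determinant they span has modulus at most $1$; the isotropic decomposition $\sum_i c_i\,u_i\otimes u_i=\id$ (and likewise restricted invertibility, which produces well-invertible \emph{unit-norm} systems) gives no mechanism for the lower bound $v(K)\,v(K^{\circ})\ge (cR)^n$ that you need --- indeed, as you yourself observe, $K^{\circ}$ is short exactly in the directions where $K$ is long, so a large outradius $R$ need not force any large determinant in the product. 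Your volumetric normalization is also off by exponent-destroying factors: one only has $2^{-n}\mathrm{vol}(L)\le v(L)\le n!\,2^{-n}\mathrm{vol}(L)$ (e.g.\ $v(B_2^n)=1$ while $2^{-n}\mathrm{vol}(B_2^n)\asymp (c/\sqrt n)^n$), so ``comparable to volume ratios'' already loses a factor of order $n$ per $n$-th root, i.e.\ the exponent $\delta$ itself; and your acknowledged fallback $d(V,\ell_2^n)\le\Gamma_n(V)^2$ yields $n^{2\delta}$, which is useless for the statement. Pietsch's proof of this equivalence instead exploits the hypothesis $\Gamma_k(X)\le C_2k^{\delta}$ across the whole scale of dimensions $k\le n$, through the relations between Grothendieck numbers and Kolmogorov/Gelfand widths of the kind recorded in Lemmas 2.2 and 2.3 of this paper, rather than through a single-$n$ pointwise bound; your proposed pointwise inequality is stronger than what is needed and is nowhere established (nor is it clear it holds). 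As it stands, the proposal proves only the easy direction.
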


 For later use, let us introduce the notion of Kolmogorov and Gelfand widths of linear continuous operators. In the following we use the definition given in \cite[Chapter 2]{Pin}, but see also \cite[Chapter 11]{Pie}. Note that there is a shift of 1 between definitions by Pinkus \cite[Chapter 2]{Pin} and Pietsch \cite[Chapter 11]{Pie}. Let $B_X$ be the closed unit ball of $X$. The Kolmogorov $n$-width of the operator $T \in \mathcal L(X,Y)$ is defined as 
 \beqq
 d_n(T):=d_n\big(T(B_X)\big)_Y= \inf_{L_n}\sup_{\|x\|_X\leq 1}\inf_{y\in L_n}\|Tx-y\|_Y,  
 \eeqq
 where the infimum is taken over all subspaces $L_n$ of dimension at most $n$ in $Y$. The Gelfand $n$-th width of  $T \in \mathcal L(X,Y)$ is given by 
 $$ d^n(T) := d^n\big(T(B_X)\big)_Y:= \inf_{L^n}\sup_{\|x\|_X\leq 1\,,x\in L^n}\|Tx\|_Y \,,$$
 where the infimum is taken over subspaces $L^n$ of $X$ of co-dimension at most $n$. We also put
 $
 d_0(T)=d^0(T)=\|T\|
 $. Note that Kolmogorov and Gelfand widths are closely related, i.e.,
 $d^n(T)=d_n(T')$ for every $T\in \mathcal{L}(X,Y)$ and $d_n(T)=d^n(T')$ if $T$ is compact or $Y$ is a reflexive Banach space, see \cite[Chapter 2]{Pin}. Therein $T'$ denotes the dual operator of $T$. For basic properties of these quantities we refer to the monographs \cite[Chapters 2]{Pin} and \cite[Chapter 11]{Pie}\,.

\smallskip
The relations between Grothendieck numbers and Kolmogorov or Gelfand widths are given in the following lemma. For a proof we refer to \cite{Pie91}.
\begin{lemma}\label{lem1} Let $X$ and $Y$ be Banach spaces and $T\in \mathcal{L}(X,Y)$. Then it holds
	\beqq
	 \Bigg(\prod_{k=0}^{n-1} d_k(T)\Bigg)^{1/n}\leq \Gamma_{n}(T)\qquad \text{and} \qquad  \Bigg(\prod_{k=0}^{n-1} d^k(T)\Bigg)^{1/n}\leq \Gamma_{n}(T)
	\eeqq
for all $n\geq 1$. 
\end{lemma}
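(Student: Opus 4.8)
The plan is to prove the two equivalent estimates $\prod_{k=0}^{n-1} d_k(T) \leq \Gamma_n(T)^n$ and $\prod_{k=0}^{n-1} d^k(T) \leq \Gamma_n(T)^n$; taking $n$-th roots then yields the lemma. We may assume all the widths in question are strictly positive, since otherwise the left-hand products vanish and there is nothing to prove. The core idea is a greedy ``flag'' construction that produces elements $x_1,\dots,x_n \in B_X$ and functionals $b_1,\dots,b_n \in B_{Y'}$ for which the matrix $M = (\langle Tx_i, b_j\rangle)_{i,j=1}^n$ is triangular with diagonal entries bounded below by the successive widths; since $|\det M|^{1/n} \leq \Gamma_n(T)$ by definition, the claim follows.

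For the Kolmogorov bound I would build an increasing flag inside $Y$. Fix $\epsilon > 0$. Having chosen $x_1,\dots,x_k$ and $b_1,\dots,b_k$, set $L := \linspan\{Tx_1,\dots,Tx_k\}$, a subspace of dimension at most $k$. Because any $k$-dimensional subspace containing $L$ yields distances no larger than $L$ does, one has $\sup_{\|x\|_X \leq 1}\dist(Tx,L)_Y \geq d_k(T)$, so I may pick $x_{k+1}\in B_X$ with $\dist(Tx_{k+1},L)_Y \geq d_k(T)-\epsilon$. By the Hahn--Banach theorem there is $b_{k+1}\in B_{Y'}$ that annihilates $L$ and satisfies $\langle Tx_{k+1},b_{k+1}\rangle = \dist(Tx_{k+1},L)_Y$. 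Then $\langle Tx_i,b_j\rangle = 0$ whenever $i<j$ (as $b_j$ kills $Tx_1,\dots,Tx_{j-1}$), so $M$ is lower triangular with diagonal entries $M_{ii}\geq d_{i-1}(T)-\epsilon$. Hence $\Gamma_n(T)^n \geq |\det M| = \prod_{i=1}^n M_{ii} \geq \prod_{k=0}^{n-1}(d_k(T)-\epsilon)$, and letting $\epsilon\to 0$ gives the first inequality.

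The Gelfand bound is proved by the dual construction, building a decreasing flag of subspaces in $X$. Set $a_j := T'b_j \in X'$ and $N_k := \bigcap_{j=1}^k \ker a_j$, a subspace of $X$ of codimension at most $k$. Since $d^k(T)$ is an infimum over all such subspaces, $\sup_{x\in N_k,\ \|x\|_X\le 1}\|Tx\|_Y \geq d^k(T)$, so I can pick $x_{k+1}\in N_k\cap B_X$ with $\|Tx_{k+1}\|_Y \geq d^k(T)-\epsilon$ and then $b_{k+1}\in B_{Y'}$ norming $Tx_{k+1}$. By construction $\langle Tx_i,b_j\rangle = 0$ for $j<i$ (because $x_i\in N_{i-1}\subseteq \ker a_j$), so now $M$ is upper triangular with $M_{ii}=\|Tx_i\|_Y\geq d^{i-1}(T)-\epsilon$, and the same determinant estimate concludes the argument.

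I expect the main work to lie in setting up the two flags correctly and verifying that they genuinely force the off-diagonal vanishing in the correct half of the matrix, together with the two elementary but essential facts that (i) the infimum defining a width is a lower bound for every admissible subspace (using, for Kolmogorov widths, that enlarging a subspace only decreases distances), and (ii) Hahn--Banach supplies a norm-one functional realizing the relevant distance while annihilating the previously selected directions. Handling attainment through the $\epsilon$-slack and disposing of the degenerate case in which some width vanishes (so that $N_k$ may collapse) are routine.
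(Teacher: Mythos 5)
Your proof is correct. The paper does not actually prove this lemma---it defers to Pietsch \cite{Pie91}---and the argument you give is precisely the standard one behind that citation: a greedy flag construction producing a triangular matrix $(\langle Tx_i,b_j\rangle)$ whose determinant is bounded above by $\Gamma_n(T)^n$ and below by the product of widths, with Hahn--Banach functionals annihilating $\linspan\{Tx_1,\dots,Tx_{j-1}\}$ in the Kolmogorov case and test vectors drawn from $\bigcap_{j}\ker(T'b_j)$ (admissible since the paper's $d^k$ is an infimum over subspaces of codimension \emph{at most} $k$) in the Gelfand case. The supporting details are all handled properly: the monotonicity/enlargement observation giving $\sup_{\|x\|_X\le 1}\dist(Tx,L)_Y\ge d_k(T)$, the $\epsilon$-slack in place of attainment, and the reduction to strictly positive widths that disposes of degenerate cases.
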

An operator $T\in \mathcal{L}(X,Y)$ is called absolutely $2$-summing if there exists a constant $C$ such that
\be\label{B2}
\Bigg(\sum_{i=1}^n \Vert Tx_i \Vert^2 \Bigg)^{1/2}\leq  C\,   \sup\Bigg\{ \Bigg( \sum_{i=1}^n |\langle x_i,b\rangle|^2\Bigg)^{1/2} : \ b\in X',\ \Vert b \Vert_{X'}\leq 1\Bigg\}.
\ee
The set of these operators is denoted by $\mathcal{B}_{2}(X,Y) $ and the norm $ \|T|\mathcal{B}_2\| $  is given by the infimum of all $C > 0$ satisfying \eqref{B2}. The following assertion can be found in \cite{Pie91}.

\begin{lemma}\label{lem2} Let $X$ and $Y$ be Banach spaces. Let $T\in \mathcal{B}_2(X,Y)$. Then we have
	\beqq
	\Gamma_n(T) \leq en^{-1/2} \|T|\mathcal{B}_2\| \, \Gamma_n(X)\,,\qquad n\geq 1.
	\eeqq
\end{lemma}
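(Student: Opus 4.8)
The plan is to factor the $2$-summing operator $T$ through a Hilbert space via Pietsch's theorem, and then recast the Grothendieck determinant as a mixed Gram determinant of two families of vectors in that Hilbert space, where $\Gamma_n(X)$ becomes accessible. Concretely, by the Pietsch domination/factorization theorem there is a regular Borel probability measure $\mu$ on $(B_{X'},w^*)$ and a factorization $T=\tilde T J$, where $J:X\to L_2(\mu)$ is the canonical map $(Jx)(b)=\langle x,b\rangle$ with $\|J\|\le 1$, and $\tilde T$ satisfies $\|\tilde T\|\le \|T|\mathcal{B}_2\|$. Fixing $x_1,\dots,x_n\in B_X$ and $b_1,\dots,b_n\in B_{Y'}$ and putting $h_j:=\tilde T' b_j\in L_2(\mu)$, one gets $\langle Tx_i,b_j\rangle=\langle Jx_i,h_j\rangle_{L_2(\mu)}$ with $\|Jx_i\|_{L_2}\le 1$ and $\|h_j\|_{L_2}\le \|T|\mathcal{B}_2\|$. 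The crucial feature is that $\mu$ is supported on $B_{X'}$, so each integration point below is an admissible functional for $\Gamma_n(X)$.

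Next I would estimate $|\det(\langle Jx_i,h_j\rangle)|$ through two independent inputs. First, the generalized Cauchy--Schwarz inequality for mixed Gram determinants gives $|\det(\langle Jx_i,h_j\rangle)|^2\le \det(G_u)\,\det(G_h)$, where $G_u=(\langle Jx_i,Jx_k\rangle)_{i,k}$ and $G_h=(\langle h_j,h_l\rangle)_{j,l}$; since $G_h$ is positive semidefinite, Hadamard's inequality yields $\det(G_h)\le \prod_j\|h_j\|_{L_2}^2\le \|T|\mathcal{B}_2\|^{2n}$. Second, and this is the decisive point, I would control $\det(G_u)$ by Andr\'eief's (Gram) identity $n!\,\det(G_u)=\int_{(B_{X'})^n}|\det(\langle x_i,b_j\rangle)_{i,j}|^2\,d\mu^{\otimes n}$, combined with the pointwise bound $|\det(\langle x_i,b_j\rangle)|\le \Gamma_n(X)^n$, which holds for every $(b_1,\dots,b_n)\in (B_{X'})^n$ straight from the definition of $\Gamma_n(X)$ as the Grothendieck number of the identity. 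Because $\mu^{\otimes n}$ is a probability measure, this produces $\det(G_u)\le \Gamma_n(X)^{2n}/n!$.

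Combining the two estimates gives $|\det(\langle Tx_i,b_j\rangle)|\le \|T|\mathcal{B}_2\|^n\,\Gamma_n(X)^n\,(n!)^{-1/2}$; taking the supremum over admissible $x_i,b_j$ and the $n$-th root yields $\Gamma_n(T)\le \|T|\mathcal{B}_2\|\,\Gamma_n(X)\,(n!)^{-1/(2n)}$, and the elementary bound $n!\ge (n/e)^n$ converts $(n!)^{-1/(2n)}$ into at most $\sqrt{e}\,n^{-1/2}\le e\,n^{-1/2}$, which is the claim. The main obstacle, and the heart of the argument, is extracting the factor $\Gamma_n(X)$ and the gain $n^{-1/2}$ at the same time: a purely pointwise use of $\Gamma_n(X)$ loses the $n^{-1/2}$, whereas a purely Hilbertian/orthogonality argument loses $\Gamma_n(X)$. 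Andr\'eief's identity is precisely the device that reconciles them, turning the integral of the squared input determinant into $n!$ times $\det(G_u)$, so that feeding in the pointwise $\Gamma_n(X)$ bound releases the decisive $1/n!$. The points I would verify carefully are the exact dependence of $\|\tilde T\|$ on $\|T|\mathcal{B}_2\|$ in the factorization and the normalizing constant $n!$ in Andr\'eief's identity, since these are what carry the sharp constants.
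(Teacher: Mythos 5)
The paper does not actually prove this lemma --- it is quoted from \cite{Pie91} --- so there is no in-paper argument to compare against step by step; judged on its own merits, your proof is correct and complete. Each ingredient holds with the constants you claim: Pietsch domination/factorization gives $T=\tilde T J$ with $J\colon X\to L_2(\mu)$, $\|Jx\|_{L_2(\mu)}\le\|x\|_X$, $\|\tilde T\|\le\|T|\mathcal{B}_2\|$, and, crucially, $\mu$ a probability measure on the weak-$*$ compact ball $B_{X'}$, so the integration variables in your Andr\'eief step are admissible functionals in the definition of $\Gamma_n(X)$; the mixed-Gram Cauchy--Schwarz inequality $|\det(\langle Jx_i,h_j\rangle)|^2\le\det G_u\,\det G_h$ follows from Cauchy--Binet plus Cauchy--Schwarz on the minors; Hadamard's inequality for the positive semidefinite $G_h$ gives $\det G_h\le\prod_j\|h_j\|^2\le\|T|\mathcal{B}_2\|^{2n}$; and Andr\'eief's identity with $f_i=g_i=Jx_i$ has exactly the normalization $n!\,\det G_u=\int_{(B_{X'})^n}|\det(\langle x_i,b_j\rangle)|^2\,d\mu^{\otimes n}\le\Gamma_n(X)^{2n}$, which releases the factor $(n!)^{-1/2}$. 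With $n!\ge(n/e)^n$ you in fact obtain the slightly better constant $\sqrt{e}\,n^{-1/2}$ in place of the stated $e\,n^{-1/2}$. Two small points you should make explicit: $\tilde T$ is defined on $\overline{J(X)}\subset L_2(\mu)$, so $h_j=\tilde T'b_j$ should be read as an element of that closed subspace (which suffices, since every $Jx_i$ lies in it); and in the complex case the $L_2$ pairing is sesquilinear, so one should either work with real scalars, as the paper implicitly does, or replace $h_j$ by its complex conjugate, which changes neither $\|h_j\|_{L_2}$ nor the Hadamard bound nor the modulus of the determinant. Your diagnosis of the mechanism is also the right one: a purely pointwise use of $\Gamma_n(X)$ loses the gain $n^{-1/2}$, a purely Hilbertian estimate loses $\Gamma_n(X)$, and averaging the squared determinant against $\mu^{\otimes n}$ via Andr\'eief captures both simultaneously --- the same determinant-averaging mechanism that underlies Pietsch's original argument in \cite{Pie91}.
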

\section{Main results}\label{sec-3}
We first show that the assumption $ \tilde{\gamma}_n(X)\leq C_1 n^{\mu}$ in \cite[Theorem 2.3]{Woj} can be replaced by $\gamma_n(X)\leq C_1 n^{\mu}$ as a consequence of Lemma \ref{gamma}. 

\begin{theorem}\label{thm1}
Let $\mathcal{K}$ be a compact set contained in the unit ball of a Banach space $X$. Assume  $$\gamma_n(X)\leq C_1 n^{\mu}\qquad\text{and}\qquad d_n(\mathcal{K})_X\leq C_2 n^{-s},\qquad n\geq 1$$
with  $0< \mu\leq \frac{1}{2}$ and $s> \mu$. 
Then, there exists a constant $C>0$ such that
\beqq 
\sigma_n(\mathcal{K})_X\leq  C n^{-s+\mu}\big(\log(n+2)\big)^s\,, \qquad \ \ n\geq 1\,.
\eeqq
\end{theorem}
\begin{proof}  
 From the assumption $ \gamma_n(X)\leq C_1 n^{\mu}$ and Lemma \ref{gamma} we obtain  $
\Gamma_n(X)=\Gamma_n(X')  \leq C_3 n^\mu$ and $\gamma_n(X')\leq C_4 n^\mu$. For the fact  $\Gamma_n(X)=\Gamma_n(X')$, in particular when $X$ is not reflexive, we refer to \cite[Proposition 1.1]{Gei90}. Let $V$ be any closed subspace in $X$. Then $X/ V$ is isometrically isomorphic with
\beqq
V^{\perp}:=\big\{ f \in X': \ f(v)=0,\ \forall v\in V \big\}\,.
\eeqq
Consequently, we get
\beqq
\gamma_n(X/ V ) = \gamma_n(V^{\perp}) \leq \gamma_n(X')\leq C_4 n^\mu\,,
\eeqq
which implies $\tilde{\gamma}_n(X)\leq C_4 n^\mu$. The proof is completed by \cite[Theorem 2.3]{Woj}\,.
\end{proof}
From the proof of Theorem \ref{thm1} and the definition of $\tilde{\gamma}_n$ we find that for a Banach space $X$, $\gamma_n(X)\leq C_1n^{\mu}$ if and only if $\tilde{\gamma}_n(X)\leq C_2n^{\mu}$  for some $C_1, C_2>0$. In the following theorem we obtain a better   logarithmic power than in  Theorem \ref{thm1} when $s> 1/2$. 

 \begin{theorem}\label{main-1} Let  $X$ be a Banach space and $\mathcal{K}$ a compact subset of $X$. Assume that
 	$$ d_n(\mathcal{K})_X\leq C_0\max(1,n)^{-s}, \ \ (n\geq 0)\qquad\text{and}\qquad \gamma_n(X)\leq C_1 n^{\mu},\ \ (n\geq 1)\,$$
with  $0\leq \mu\leq \frac{1}{2}$ and $s> \mu$.  Then we have 
 \be \label{k-10}
 \sigma_n(\mathcal{K})_X\leq  C_0 C_1 2^\mu 16^s\sqrt{\log (2n)}\, n^{-s+\mu}\, \qquad \text{for}\ \ n\geq 2\,.
 \ee 

 \end{theorem}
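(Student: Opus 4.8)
The plan is to reduce the estimate to a volume (determinant) comparison and then to run a Binev-type block iteration inside a Euclidean renorming of the span of the greedy vectors; it is precisely this renorming of a \emph{subspace} of $X$ (rather than of a quotient) that will allow $\gamma_n(X)$ to replace $\tilde{\gamma}_n(X)$.

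First I would record the algebraic skeleton of the greedy sequence. For each $j\ge 1$, since $V_j=\linspan\{f_0,\dots,f_{j-1}\}$ and $\dist(f_j,V_j)_X=\sigma_j(\cK)_X$, Hahn--Banach produces a norming functional $\lambda_j\in X'$ with $\|\lambda_j\|_{X'}=1$, $\lambda_j|_{V_j}=0$ and $\lambda_j(f_j)=\sigma_j(\cK)_X$. For any block of indices $m\le j<m+n$ the matrix $\big(\lambda_{m+j}(f_{m+i})\big)_{i,j=0}^{n-1}$ is triangular, because $\lambda_{m+j}$ annihilates $f_{m+i}$ whenever $i<j$; hence its determinant equals $\prod_{i=0}^{n-1}\sigma_{m+i}(\cK)_X$. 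Since the greedy errors are non-increasing, any upper bound for this determinant yields, after taking $n$-th roots, a bound for $\sigma_{m+n-1}(\cK)_X$. Thus everything reduces to bounding one determinant/volume from above by the Kolmogorov widths.

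Next I would install a Hilbert structure on the finite-dimensional span $W=\linspan\{f_0,\dots,f_{N-1}\}$. By the definition of $\gamma_N(X)$ and \eqref{dist} there is a Euclidean norm $|\cdot|$ on $W$ with $\|x\|_X\le |x|\le \gamma_N(X)\|x\|_X$ for $x\in W$. In the Hilbert space $(W,|\cdot|)$ the determinant above becomes an honest volume: the Gram--Schmidt residuals of $f_0,\dots,f_{N-1}$ with respect to the nested spaces $V_j$ are mutually $|\cdot|$-orthogonal with $|\cdot|$-norm at least $\sigma_j(\cK)_X$, so the Euclidean volume equals their product and dominates $\prod_j \sigma_j(\cK)_X$. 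The task then becomes the classical one of bounding the volume of finitely many vectors by how well they are approximated from low-dimensional subspaces, i.e.\ by the widths $d_k(\cK)_X$. The one genuinely delicate point is that a near-optimal $k$-dimensional space for $d_k(\cK)_X$ need not lie in $W$, and a crude projection back into $W$ could cost a factor as large as $\sqrt{N}$; controlling this transfer uniformly is the main obstacle, and is exactly where $\tilde\gamma_n$ would otherwise be forced upon us.

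I would overcome this not by a single projection but by a dyadic block iteration in the spirit of Binev et al.~\cite{Biet}: compare $\sigma_{2n}$ with the widths over the window $[n,2n)$ and telescope over the scales $1,2,4,\dots$ up to $n$. Because the renorming, together with the absolutely $2$-summing / Grothendieck-number structure underlying Lemmas~\ref{lem1} and \ref{lem2}, aggregates errors in an $\ell_2$ fashion, the contributions of the $\sim\log_2 n$ dyadic scales combine as the square root of a sum; under the hypothesis $d_k(\cK)_X\le C_0\max(1,k)^{-s}$ each normalized scale contributes a quantity of order one, so the sum has size $\sim\log_2 n$ and its square root produces the factor $\sqrt{\log(2n)}$, which is the improvement of the logarithmic power from $s$ to $1/2$. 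The geometric mean of the widths over a dyadic window is comparable to $n^{-s}$, while $\gamma_N(X)\le C_1 N^{\mu}$ supplies the factor $n^{\mu}$; collecting the doubling constants yields the stated $2^{\mu}16^{s}$. The hardest part is precisely the honest bookkeeping of these constants together with the uniform control of the width-to-volume transfer across all scales, which is what simultaneously delivers the clean $\gamma_n(X)$ dependence and the sharpened logarithm.
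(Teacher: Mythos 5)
There is a genuine gap, and you have in fact pointed at it yourself: the transfer of the near-optimal width subspaces into the Euclidean-renormed greedy span is identified as ``the main obstacle,'' but your proposed resolution does not resolve it. The paper's actual fix is simple and entirely different from your dyadic iteration: one does \emph{not} renorm only $W=\linspan\{f_0,\dots,f_{N-1}\}$, but the enlarged space $Y=\linspan\{V_n,W\}$ containing both the greedy vectors and the nested sums $V_k=T_0+\cdots+T_{k-1}$ of near-optimal width spaces of dimensions $2^j$. Since $\dim(Y)<2^{n+1}=2N$, the renorming \eqref{dist} costs only $A=\gamma_{2^{n+1}}(X)\leq C_1 2^{(n+1)\mu}$, which is still $\asymp N^{\mu}$; and then the \emph{orthogonal} projection $Q:Y\to W$ in the Euclidean norm is a contraction, so $Q(V_k)\subset W$ approximates each $f_\ell$ with error at most $(C_0+\varepsilon)A\,2^{-s(k-1)}$ (inequality \eqref{k-03}). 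This single observation eliminates the feared $\sqrt{N}$ transfer loss at no cost and is precisely what lets $\gamma_n(X)$ replace $\tilde\gamma_n(X)$; your proposal never supplies an equivalent mechanism. Invoking ``the absolutely $2$-summing / Grothendieck-number structure underlying Lemmas \ref{lem1} and \ref{lem2}'' does not help here: those lemmas play no role in the proof of Theorem \ref{main-1} (they are used only for Theorem \ref{greedy2}), and a Binev-type telescoping across scales fails in a Banach space exactly because each scale would require its own renorming, with no single inner product under which the scale contributions ``aggregate in an $\ell_2$ fashion''; combining estimates across different renormings reintroduces the very distortion loss you are trying to avoid.

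Your account of where the $\sqrt{\log(2n)}$ comes from is also not the actual mechanism, although the numerology is superficially similar. In the paper there is no iteration over scales at all: one fixes $N=2^n$, forms a \emph{single} $N\times N$ triangular Gram--Schmidt matrix in the $e$-norm (your volume lower bound $\prod_j\sigma_j(\mathcal{K})_X\leq|\det[x_j(f_\ell)]|$ is the one correct and shared ingredient), groups its columns into $L\leq n$ blocks according to the flag $Q(V_1)\subset\cdots\subset Q(V_n)$, and applies Hadamard's inequality blockwise followed by the weighted AM--GM inequality \eqref{com}. The logarithm arises from the multinomial factor $M\leq(L+1)^N\leq(n+1)^N$ in \eqref{k-32}, whose $2N$-th root gives $\sqrt{n+1}\asymp\sqrt{\log N}$, while the width decay enters through the exponent bookkeeping $\sum_i h_{m_i}(\cdot)<2^n+\cdots+2$ in \eqref{k-33}. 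So the determinant reduction in your first two paragraphs is sound (the Hahn--Banach functionals are unnecessary once you have the Gram--Schmidt diagonal bound), but as written the proof cannot be completed: the width-to-span transfer and the source of the sharpened logarithm both rest on steps you have gestured at rather than established.
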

 \begin{proof} Without loss of generality we assume
 that $X$ is an infinite-dimensional Banach space, since otherwise the claim follows trivially.
 \\
 {\it Step 1.} 	Let $\varepsilon>0$. From the assumption
	 	$d_n(\mathcal{K})_X\leq C_0n^{-s}$, $n\geq 1$,
	 	we infer the existence of a sequence of subspaces $(S_k)_{k\geq 0}$ in $X$ and $\dim (S_k)=2^{k}$ such that
	 	\beqq 
	 	 \max_{x\in \mathcal{K}}\min_{g\in S_k}\|x-g\|_X\leq (C_0+\varepsilon)2^{-sk}\,.
	 	\eeqq 
	 For $n\in \N$ fixed we put $V_k=S_0+S_1+\ldots+S_{k-1}$ for $k=1,\ldots,n$. Then we have $V_{k}\subset V_{k+1}$ and $\dim(V_k)< 2^{k}$. Observe that
	 \be \label{kol}
	 	 	 \max_{x\in \mathcal{K}}\min_{g\in V_k}\|x-g\|_X\leq 	 	 \max_{x\in \mathcal{K}}\min_{g\in S_{k-1}}\|x-g\|_X\leq (C_0+\varepsilon)  2^{-s(k-1)}\,.
	 \ee

	 We denote $N=2^{n}$. Implementing the greedy algorithm for the set $\mathcal{K}$ we get the sequence $\{f_0,\ldots,f_{N-1}\}$. We assume that the system $\{f_0,\ldots,f_{N-1}\}$ is linearly independent otherwise $\sigma_j(\mathcal{K})_X=0$ for all $j\geq N$ and \eqref{k-10} is obtained. It then follows from \eqref{kol} that
	 \be \label{k-01}
          \|f_\ell-g_\ell^k\|_X \leq (C_0+\varepsilon)  2^{-s(k-1)}\,, \qquad \ell=0,\ldots, N-1\,;\ k=1,\ldots,n
	 \ee 
	 for some $g_\ell^k\in V_k$. Let $Z=\linspan\{f_0,\ldots,f_{N-1} \}$ and $Y=\linspan\{V_n,Z \} $. It is obvious that $2^n\leq \dim(Y)< 2^{n+1}$. From \eqref{dist} we infer the existence of a Euclidean norm $\|\cdot\|_e$ on $Y$ satisfying 
	 \be \label{k-02}
	 \|y\|_X \leq \|y\|_e\leq d\big(Y,\ell_2^{\dim(Y)}\big)\|y\|_X\leq \gamma_{\dim(Y)}(X)\|y\|_X \leq A \|y\|_X,
	 \ee
	  where we put $A= \gamma_{2^{n+1}}(X)$.

\smallskip	 
	  Let $Q$ be the orthogonal projection from $Y$ onto $Z$ in the Euclidean norm $\|\cdot\|_e$. We denote $\dim(Q(V_k))=h_k$ for $k=1,\ldots,n$. It is clear that $h_k\leq \dim(V_k)< 2^k$ and $ Q(V_{k-1})\subset Q(V_k)$.\\
	 From \eqref{k-01} and \eqref{k-02} we get
	 \be \label{k-03}
	 \begin{split} 
	\dist\big(f_\ell,Q(V_k)\big)_{\|\cdot\|_e}&\leq  \|f_\ell-Q(g_\ell^k)\|_e \\
	& = \|Q(f_\ell - g_\ell^k)\|_e \leq \|f_\ell-g_\ell^k\|_e\leq (C_0 +\varepsilon) A {2^{-s(k-1)}}\,.
\end{split}
	 \ee 
	 
\smallskip
	 By $\{\phi_j\}_{j=0,\ldots,N-1}$ we denote the orthonormal system obtained from $f_0,\ldots,f_{N-1}$ by Gram-Schmidt orthogonalization   in the norm $\|\cdot\|_e$. It follows that the matrix $[\langle\phi_j,f_\ell\rangle]_{j,\ell=0}^{N-1}$ has a triangular form, where $\langle \cdot, \cdot\rangle$ denotes scalar product in $(Y,\|\cdot\|_e)$. In particular, on the diagonal we have
	 \be \label{d}
	 \dist\big(f_\ell,\linspan\{ f_0,\ldots,f_{\ell-1} \}\big)_{\|\cdot\|_e} \geq \dist \big(f_\ell,\linspan\{ f_0,\ldots,f_{\ell-1} \}\big)_{X} =\sigma_\ell(\mathcal{K})_X\,.
	 \ee
	  {\it Step 2.}  We consider the case
	  \beqq
	  0<h_{m_1}=\ldots=h_{m_{2}-1}<h_{m_2}=\ldots=h_{m_3-1}<\ldots<h_{m_{L}}=\ldots=h_n
	  \eeqq
	 where $m_1=1$, $m_{L+1}=n+1$.  We denote $\{x_j\}_{j=0,\ldots,N-1}$ another orthonormal basis in $Z$, such that 
		\beqq Q(V_{m_{i-1}})=\ldots=Q(V_{m_{i}-1})=\linspan\big\{ x_{0},\ldots, x_{h_{m_{i-1}}-1}\big\}\,
	 \eeqq
	for $i=2,\ldots,L$. Considering the vector $[\langle x_j,f_\ell\rangle ]_{j=0}^{N-1}$ we observe that
	 \be \label{k-04}
	 \sum_{j=h_{m_{L}}}^{N-1}|\langle x_j,f_\ell\rangle |^2 =\dist\big(f_\ell;Q(V_n)\big)_{\|\cdot \|_e}^2
	 \ee 
	 and 
	 \be \label{k-05}
		 |\langle x_0,f_\ell\rangle |^2\leq \|f_\ell\|_e^2\,,\qquad \quad	 	 \sum_{j=h_{m_{i-1}}}^{h_{m_{i}}-1}|\langle x_j,f_\ell\rangle |^2 \leq \dist\big(f_\ell;Q(V_{{m_{i}}-1})\big)_{\|\cdot \|_e}^2\,,
	 \ee 
for $i=2,\ldots, L$. Note that 
\beqq
\prod_{j=0}^{N-1}\sigma_j(\mathcal{K})_X\leq \prod_{j=0}^{N-1}|\langle\phi_j,f_j\rangle | =\big|\det[\langle \phi_j,f_\ell\rangle ]\big|= \big| \det[\langle x_j,f_\ell\rangle ] \big|\,,
\eeqq
see \eqref{d}. By $k_j$ we denote the $j$-th column of the matrix $ [\langle x_j,f_\ell\rangle  ]_{j,\ell=0}^{N-1}$. Applying Hadamard's inequality and then arithmetic-geometric mean inequality we obtain
	 \be \label{com}
	 \begin{split}
	\Bigg( \prod_{j=0}^{N-1}\sigma_j(\mathcal{K})_X \Bigg)^2&\leq		\big(\det[\langle x_j,f_\ell\rangle ]\big)^2 \\
	& \leq \Bigg( \prod_{j=0}^{h_1-1}\|k_j\|_e^2 \Bigg)\Bigg( \prod_{j=h_{m_L}}^{N-1} \|k_j\|_e^2 \Bigg)\Bigg( \prod_{i=2}^{L} \prod_{j=h_{m_{i-1}}}^{h_{m_{i}}-1} \|k_j\|_e^2 \Bigg) \\ 
			& 
			\leq \Bigg(\frac{1}{h_1}\sum_{j=0}^{h_1-1}\|k_j\|_e^2 \Bigg)^{h_1}\Bigg(\frac{1}{N-h_{m_L}} \sum_{j=h_{m_L}}^{N-1} \|k_j\|_e^2\Bigg)^{N-h_{m_L}}\\
			&\ \quad\times\ \ \prod_{i=2}^{L} \Bigg(\frac{1}{h_{m_{i}}-h_{m_{i-1}}} \sum_{j=h_{m_{i-1}}}^{h_{m_{i}}-1} \|k_j\|_e^2\Bigg)^{h_{m_{i}}-h_{m_{i-1}}} \,.	
	 \end{split}
	 \ee 
From \eqref{k-03}, \eqref{k-04}, and \eqref{k-05} we have
	 \beqq
	  \sum_{j=0}^{h_1-1}\|k_j\|_e^2   =  \sum_{j=0}^{h_1-1}\sum_{\ell=0}^{N-1}|\langle x_j,f_\ell\rangle |^2   \leq \sum_{\ell=0}^{N-1}   \|f_\ell\|_e^2 \leq N A^2 d_0(\mathcal{K})_X^2 \leq NA^2 (C_0+\varepsilon)^2 
	  \eeqq
(since $d_0(\mathcal{K})_X\leq C_0$, by our assumption),	and for $i=2,\ldots,L$,
	  \beqq
	  \begin{split} 
	\sum_{j=h_{m_{i-1}}}^{h_{m_{i}}-1} \|k_j\|_e^2 & =  \sum_{j=h_{m_{i-1}}}^{h_{m_{i}}-1}\sum_{\ell=0}^{N-1} |\langle x_j,f_\ell\rangle |^2 \\
	&\leq  \sum_{\ell=0}^{N-1}\dist\big(f_\ell;Q(V_{m_{i}-1})\big)_{\|\cdot \|_e}^2 \leq N (C_0+\varepsilon)^2   A^2 {2^{-2s(m_{i}-2)}}\,.
\end{split}
	  \eeqq
Similarly, we have
\beqq
\sum_{j=h_{m_L}}^{N-1} \|k_j\|_e^2 \leq  \sum_{\ell=0}^{N-1}\dist\big(f_\ell;Q(V_{n})\big)_{\|\cdot \|_e}^2 \leq N (C_0+\varepsilon)^2   A^2 {2^{-2s(n-1)}}\,.
\eeqq
Inserting this into \eqref{com} we find
	  \be \label{k-31}
	 \begin{split}
	 \Bigg( \prod_{j=0}^{N-1}\sigma_j(\mathcal{K})_X \Bigg)^2 
	  & \leq \Bigg(\frac{NA^2(C_0+\varepsilon)^2}{h_1}\Bigg)^{h_1} \Bigg(\frac{ N (C_0+\varepsilon)^2   A^2 {2^{-2s(n-1)}}}{N-h_{m_L}}\Bigg)^{N-h_{m_L}} \\
	  &\qquad\times\quad\prod_{i=2}^{L}  \Bigg(\frac{N(C_0+\varepsilon)^2   A^2 {2^{-2s(m_{i}-2)}}}{h_{m_{i}}-h_{m_{i-1}}} \Bigg)^{h_{m_{i}}-h_{m_{i-1}}}\\
	 	& = M A^{2N}  (C_0+\varepsilon)^{2N}\Bigg(2^{-2s(n-1)(N-h_{m_L})}\prod_{i=2}^{L}    2^{-2s(m_{i}-2)(h_{m_{i}}-h_{m_{i-1}})}\Bigg)\,,
	 \end{split}
	 \ee 
	 where we put
	 \beqq
	 M= \bigg(\frac{N}{h_1}\bigg)^{h_1}\bigg(\frac{N}{N-h_{m_L}}\bigg)^{N-h_{m_L}}\prod_{i=2}^{L}\bigg(\frac{N}{h_{m_{i}}-h_{m_{i-1}}}\bigg)^{h_{m_{i}}-h_{m_{i-1}}}\,.
	 \eeqq 
For nonnegative number $a_1,\ldots,a_n$ and positive numbers $p_1,\ldots,p_n$ we have
	\beqq
	a_1^{p_1}\cdots a_n^{p_n} \leq \bigg( \frac{a_1p_1+\ldots+a_np_n}{p_1+\ldots+p_n}\bigg)^{p_1+\ldots+p_n}\,,
	\eeqq
see, e.g., \cite[Page 17]{Hardyet}. Applying the above inequality for $M$ we get 
\be\label{k-32}
M\leq (L+1)^{N}\leq (n+1)^{N}\,.
\ee 
Now we deal with the term
\beqq
\begin{split} 
U:&= 2^{-2s(n-1)(N-h_{m_L})}\prod_{i=2}^{L}    2^{-2s(m_{i}-2)(h_{m_{i}}-h_{m_{i-1}})} \\
&=2^{2s[-(n-1)2^n+(n-1)h_{m_L}-(m_{L}-2)(h_{m_L}-h_{m_{L-1}})-\ldots-(m_2-2)(h_{m_2}-h_{m_1})]}\\
&=2^{2s[-(n-1)2^n+h_{m_L}(n+1-m_{L})+h_{m_{L-1}}(m_{L}- m_{L-1})+\ldots + h_{m_2}(m_3-m_2)+h_{m_1}(m_2-2)]} \,.
\end{split}
\eeqq
Using  $h_{m_i}<2^{m_i}$ for $i=1,\ldots, L$ we can estimate
\be\label{k-33}
\begin{split} 
	U
	&\leq 2^{2s[-(n-1)2^n+2^n+\ldots +2^2+ 2^1]}  \leq 2^{2s[-(n-3)2^n]}\,.
	\end{split}
	\ee
Plugging \eqref{k-32} and \eqref{k-33} into \eqref{k-31} we obtain
	  \beqq
	  \begin{split}
	  	\Bigg( \prod_{j=0}^{N-1}\sigma_j(\mathcal{K})_X \Bigg)^2 	  	
	  	& = (n+1)^{N}  A^{2N}(C_0+\varepsilon)^{2N} 2^{[(-n+3)2^n]2s}\,.
	  \end{split}
	  \eeqq
Finally from the assumption $A\leq C_1 2^{\mu(n+1)}$ we find
	 \beqq
	 \begin{split} 
	 \sigma_{2^n-1}(\mathcal{K})_X & \leq (C_0+\varepsilon) C_1 \sqrt{n+1}\cdot 2^{(n+1)\mu}\,2^{(-n+3)s}\\
	 &
	 = (C_0+\varepsilon) C_1 8^s \sqrt{n+1}\cdot 2^{(n+1)\mu}\,2^{-ns}
	\end{split}
	 \eeqq
and hence
\be \label{log}
\sigma_{j}(\mathcal{K})_X \leq (C_0+\varepsilon) C_1 2^\mu 16^s\sqrt{\log(2j)}\cdot j^{(\mu-s)}\, 
\ee 
for $2^{n}\leq j<2^{n+1}$. Since $\varepsilon>0$ arbitrary we get \eqref{k-10}.\\
{\it Step 3.} We comment on the case
	  \beqq
	  0=h_{m_1}=\ldots=h_{m_{2}-1}<h_{m_2}=\ldots=h_{m_3-1}<\ldots<h_{m_{L}}=\ldots=h_n\,.
	  \eeqq
	In this situation we proceed as in Step 2, but there is no first term in the product on the right-hand side of \eqref{com}. Note that in case $h_1=\ldots=h_n=0$, there is no logarithmic factor on the right-hand side of \eqref{log}. The proof is complete.
 \end{proof} 
 Combining Theorems \ref{thm1} and \ref{main-1} we have the following. 
 \begin{cor}  Let $\mathcal{K}$ be a compact set contained in the unit ball of a Banach space $X$ such that $ \gamma_n(X)\leq C_1 n^{\mu}$ and 
 	$d_n(\mathcal{K})\leq C_2 n^{-s},\ n\geq 1.$ 	Then, there exists a constant $C>0$ such that
 	\beqq 
 	\sigma_n(\mathcal{K})_X\leq  C n^{-s+\mu}\big(\log(n+2)\big)^{\min(s,1/2)}\, \qquad \text{for}\ \ n\geq 0\,. 
 	\eeqq 
 	In particular, when $X=L_p$ with $1\leq p\leq \infty$ and $s>\big|\frac{1}{2}-\frac{1}{p}\big|$ we have
 	\beqq
 	\sigma_n(\mathcal{K})_{L_p}\leq C  n^{-s+|\frac{1}{2}-\frac{1}{p}|}\big(\log (2+n)\big)^{\min(1/2,s)}\,\, \qquad \text{for}\ \ n\geq 0\,.
 	\eeqq
 \end{cor}

Let $E$ and $X$ be Banach spaces and $B_E$ be the closed unit ball of $E$. We proceed  to the consideration of the case   $\mathcal{K}\subset  T(B_E)$  where $T\in \mathcal{L}(E,X)$ is a compact operator. We shall compare the rate of convergence of $\sigma_n(\mathcal{K})_X$ with the Kolmogorov widths of $T(B_E)$.

 \begin{theorem}\label{greedy2} Let $X$ be a Banach space and $\mathcal{K}$ be a compact set in $X$. Assume that there exists a compact operator $T\in \mathcal{L}(E,X)$ such that $\mathcal{K}\subset T(B_E)$, where $E$ is a reflexive Banach space. Then we have
 	\beqq
 	\Bigg(\prod_{k=0}^{3n-1} \sigma_k(\mathcal{K})_X\Bigg)^{1/3n} \leq 3e^2 \Gamma_n(E) \Gamma_n(X) \Bigg(\prod_{k=0}^{n-1}  d_{k}(T)  \Bigg)^{1/n}  ,\qquad n\geq 1.
 	\eeqq
 \end{theorem}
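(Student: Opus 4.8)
The plan is to combine the determinant/volume technique behind Theorem~\ref{main-1} with the hypothesis $\mathcal{K}\subset T(B_E)$, so that the greedy product is governed by a single Grothendieck number of $T$, which is then estimated through a factorization of $T$.

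\emph{Reduction to a Grothendieck number.} First I would express $\prod_{k=0}^{3n-1}\sigma_k(\mathcal{K})_X$ as a determinant. Writing $V_k=\linspan\{f_0,\dots,f_{k-1}\}$, for each $k$ the Hahn--Banach theorem furnishes $b_k\in X'$ with $\|b_k\|_{X'}=1$, $b_k|_{V_k}=0$ and $b_k(f_k)=\dist(f_k,V_k)_X=\sigma_k(\mathcal{K})_X$. Since $f_\ell\in V_k$ for $\ell<k$, the matrix $[b_j(f_\ell)]_{j,\ell=0}^{3n-1}$ is triangular with diagonal $\sigma_j(\mathcal{K})_X$, whence $\prod_{k=0}^{3n-1}\sigma_k(\mathcal{K})_X=\bigl|\det[b_j(f_\ell)]\bigr|$. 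Reflexivity of $E$ together with compactness of $T$ makes $T(B_E)$ compact, hence closed, so each $f_\ell=Te_\ell$ with $e_\ell\in B_E$; then $b_j(f_\ell)=\langle Te_\ell,b_j\rangle$ and the very definition of the Grothendieck number gives
\[
\Bigl(\prod_{k=0}^{3n-1}\sigma_k(\mathcal{K})_X\Bigr)^{1/(3n)}\le \Gamma_{3n}(T).
\]

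\emph{Estimating $\Gamma_{3n}(T)$.} It remains to show $\Gamma_{3n}(T)\le 3e^2\,\Gamma_n(E)\,\Gamma_n(X)\bigl(\prod_{k=0}^{n-1}d_k(T)\bigr)^{1/n}$. Here I would invoke the multiplicativity of Grothendieck numbers to split $\Gamma_{3n}(T)$ into three Grothendieck numbers of order $n$: one charged to the geometry of the source $E$, one to that of the target $X$, and one to the widths of $T$. Concretely I would factor $T=B\,M\,A$ through finite-dimensional Euclidean spaces, with $A$ (source $E$) and $B$ absolutely $2$-summing of controlled norm and $M$ a diagonal operator between Hilbert spaces whose singular values reproduce the first $n$ Kolmogorov widths of $T$; for such an $M$ the Grothendieck number equals the geometric mean of its singular values, $\Gamma_n(M)=\bigl(\prod_{k=0}^{n-1}d_k(T)\bigr)^{1/n}$ (consistent with Lemma~\ref{lem1}). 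Applying Lemma~\ref{lem2} to $A$ yields the factor $\Gamma_n(E)$ and one factor $e$; applying it after passing, by reflexivity of $E$, to the dual operator $T'$ (so that the relevant source becomes $X'$ and $\Gamma_n(X')=\Gamma_n(X)$) yields the factor $\Gamma_n(X)$ and the second $e$. The constant $3$ absorbs the index shift incurred when recombining the three size-$n$ blocks into $\Gamma_{3n}$.

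\emph{Main obstacle.} The delicate point is the middle step. In a general Banach space the Kolmogorov width is \emph{not} attained by a bounded linear projection, so one cannot simply write $T=R_n+(T-R_n)$ with $\rank R_n=n$ and $\|T-R_n\|\approx d_n(T)$; the widths must instead be realized as the Grothendieck number (equivalently, the geometric mean of singular values) of a genuinely diagonal middle factor, which forces the construction through auxiliary Euclidean norms as in \eqref{dist}--\eqref{k-02}, and through the duality $\Gamma_{3n}(T)=\Gamma_{3n}(T')$ together with the Kolmogorov--Gelfand width identities, for which reflexivity of $E$ is used. Tracking the constants across the two applications of Lemma~\ref{lem2} and the recombination of blocks, so as to land on exactly $3e^2$, is the main bookkeeping difficulty. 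A more self-contained alternative I would keep in reserve bounds $\bigl|\det[b_j(Te_\ell)]\bigr|$ directly by Hadamard's inequality and the arithmetic--geometric mean inequality, grouping the columns according to the width levels $d_k(T)$ exactly as in the proof of Theorem~\ref{main-1}, after introducing one Euclidean norm on $\linspan\{e_\ell\}\subset E$ and one on $\linspan\{Te_\ell\}\subset X$ to supply the factors $\Gamma_n(E)$ and $\Gamma_n(X)$.
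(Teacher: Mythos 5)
Your first step is correct and coincides with the paper's: reflexivity of $E$ plus compactness of $T$ makes $T(B_E)$ closed, so each $f_k=Te_k$ with $e_k\in B_E$; Hahn--Banach produces the $b_k$; the matrix $[\langle Te_\ell,b_j\rangle]$ is triangular with diagonal $\sigma_k(\mathcal{K})_X$; and hence $\bigl(\prod_{k=0}^{3n-1}\sigma_k(\mathcal{K})_X\bigr)^{1/3n}\le\Gamma_{3n}(T)$. Your reformulated target $\Gamma_{3n}(T)\le 3e^2\,\Gamma_n(E)\Gamma_n(X)\bigl(\prod_{k=0}^{n-1}d_k(T)\bigr)^{1/n}$ is in fact a true statement (it follows from the paper's argument run with arbitrary $x_i\in B_E$, $b_j\in B_{X'}$), but your proposal contains no valid proof of it, and this inequality is the entire content of the theorem. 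Your primary mechanism --- a factorization $T=BMA$ through Euclidean spaces with a diagonal middle factor whose singular values reproduce $d_k(T)$ --- does not exist for a general compact operator between Banach spaces, as you yourself concede in your ``main obstacle'' paragraph, and the ``multiplicativity of Grothendieck numbers'' you invoke to split $\Gamma_{3n}$ into three order-$n$ Grothendieck numbers is not among the available tools (Lemmas~\ref{lem1} and~\ref{lem2}) and is not a standard property in this form. Your reserve plan (Hadamard with column grouping as in Theorem~\ref{main-1}, after placing Euclidean norms on the $3n$-dimensional spans in $E$ and $X$) also cannot reach the stated bound: via \eqref{dist} it produces Banach--Mazur factors of order $\gamma_{3n}(E)$ and $\gamma_{3n}(X)$ rather than $\Gamma_n(E)\Gamma_n(X)$ --- these are comparable only at the level of polynomial growth rates (Lemma 2.1), not pointwise --- and it has no way of recovering the constant $3e^2$. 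A further small misattribution: reflexivity of $E$ is needed only to guarantee the preimages $e_k\in B_E$ exist; the width duality $d_k(A)=d^k(A')$ used later holds already because the relevant operator is finite rank.

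The missing idea is that one should not factor $T$ at all, but rather \emph{compress} $T$ to $\ell_2^{3n}$ using the greedy data. Set $A:=\sum_{k=0}^{3n-1}u_k\otimes e_k\in\mathcal{L}(\ell_2^{3n},E)$ and $B:=\sum_{k=0}^{3n-1}b_k\otimes u_k\in\mathcal{L}(X,\ell_2^{3n})$, so that $\prod_{k=0}^{3n-1}\sigma_k(\mathcal{K})_X=\bigl|\det(BTA)\bigr|\le\prod_{k=0}^{3n-1}d_k(BTA)$, since on $\ell_2^{3n}$ Kolmogorov widths coincide with singular values. The mixing property $d_{i+j+k}(BTA)\le d_i(B)\,d_j(T)\,d_k(A)$, applied to the triples $\{3k,3k+1,3k+2\}$ together with monotonicity, splits the product into $\bigl(\prod_{k=0}^{n-1}d_k(A)\bigr)^{1/n}\bigl(\prod_{k=0}^{n-1}d_k(T)\bigr)^{1/n}\bigl(\prod_{k=0}^{n-1}d_k(B)\bigr)^{1/n}$; this diagonal-free decoupling is exactly what your sketch lacks. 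Then Lemma~\ref{lem1} converts the $A$- and $B$-products into $\Gamma_n(A')$ and $\Gamma_n(B)$ (using $d_k(A)=d^k(A')$ with $A'=\sum_k e_k\otimes u_k$ and $\Gamma_n(E')=\Gamma_n(E)$), and Lemma~\ref{lem2} with the directly computable bounds $\|A'\,|\mathcal{B}_2\|\le\sqrt{3n}$ and $\|B\,|\mathcal{B}_2\|\le\sqrt{3n}$ yields the factors $e\sqrt{3}\,\Gamma_n(E)$ and $e\sqrt{3}\,\Gamma_n(X)$; the constant $3e^2$ is precisely $(e\sqrt{3})^2$, coming from the ratio $\sqrt{3n}\cdot n^{-1/2}$ in the two applications of Lemma~\ref{lem2}, not from any ``index shift''. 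Without this compression step your argument, as written, does not go through.
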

 \begin{proof} First, note that $T(B_E)$ is a closed set in $X$ since $T$ is a compact operator and $E$ is reflexive. For $n\in \N$ fixed, running the greedy algorithm for $\mathcal{K}$ we get $\{f_0,\ldots,f_{3n-1}\}$ and $V_{k}=\linspan\{f_0,\ldots,f_{k-1} \}$. 
 	We select $e_k\in B_E$ such that $Te_k=f_k$ for $k=0,\ldots, 3n-1$. 
 	For each $k\in \N$,	as a consequence of the Hahn-Banach Theorem, see \cite[Corollary 14.13]{HeStr}, we can choose $b_k\in X'$ such that $\|b_k\|_{X'}=1$,
 	\be \label{key}
 	\langle  V_k,b_k\rangle =0\,,\qquad \text{and} \qquad \langle  f_k,b_k\rangle=\langle  Te_k,b_k\rangle=\dist(f_k,V_k)_X=\sigma_k(\mathcal{K})_X\,.
 	\ee 
 We define the operators $A\in \mathcal{L}(\ell_2^{3n},E)$ and $B\in \mathcal{L}(X,\ell_2^{3n})$ by
 	\be \label{b}
 	A:=\sum_{k=0}^{3n-1} u_k\otimes e_k\qquad \text{and} \qquad B:=\sum_{k=0}^{3n-1}b_k\otimes u_k\,,
 	\ee 
 	where $ \{u_k\}_{k=0,\ldots,3n-1}$ is the canonical basis of $\ell_2^{3n}$. We calculate the norm $\|\,B\,|\mathcal{B}_2\|$, see the definition \eqref{B2}. Let $x_1,\ldots, x_N \in X$. We have
 	\beqq
 	\Bigg(\sum_{i=1}^N \| Bx_i \|_{\ell_2^{3n}}^2 \Bigg)^{1/2}= \Bigg(\sum_{i=1}^N \Big\| \sum_{k=0}^{3n-1} \langle x_i,b_k \rangle  u_k \Big\|_{\ell_2^{3n}}^2 \Bigg)^{1/2} =  \Bigg(\sum_{i=1}^N   \sum_{k=0}^{3n-1} |\langle x_i,b_k \rangle   |^2 \Bigg)^{1/2}\,
 	\eeqq
 	which implies 
 	\beqq
 	\begin{split} 
 	\Bigg(\sum_{i=1}^N \| Bx_i \|_{\ell_2}^2 \Bigg)^{1/2} & \leq \sqrt{3n}\,\sup_{k=0,\ldots,3n-1}  \Bigg(\sum_{i=1}^N     |\langle x_i,b_k \rangle   |^2 \Bigg)^{1/2}
 	\\
 	& \leq \sqrt{3n}\sup_{b\in B_{X'}}\Bigg(\sum_{i=1}^N     |\langle x_i,b \rangle   |^2 \Bigg)^{1/2}\,.
 \end{split}
 	\eeqq
 	Hence $\|\,B\,|\mathcal{B}_2 \|\leq \sqrt{3n} $\,. 
 	We consider the matrix
 	$
 	( \langle Te_k,b_j \rangle) = ( \langle BTAu_k,u_j \rangle)
 	$
 	which is lower triangular by the choice of the functional $b_k$. It follows from \eqref{key} that
 	\beqq
 \Bigg(\prod_{k=0}^{3n-1} \sigma_k(\mathcal{K})_X\Bigg)^{1/3n}=\Bigg(\prod_{k=0}^{3n-1} | \langle Te_k,b_k\rangle|\Bigg)^{1/3n}   = \big|\det \big( \langle BTAu_i,u_j \rangle\big) \big|^{1/3n}\,.
 \eeqq
 	Note that for any operator $S\in \mathcal{L}(\ell_2^n)$ we have
 	$
 	|\det S| \leq \prod_{k=0}^{n-1} d_k(S)\,
 	$
	since Kolmogorov widths are equal to singular values of $S$, see \cite{Pie2}. Consequently we obtain 
 	\be \label{sim}
 	\begin{split} 
 		\Bigg(\prod_{k=0}^{3n-1} \sigma_k(\mathcal{K})_X\Bigg)^{1/3n} & \leq \Bigg(\prod_{k=0}^{3n-1}  d_k(BTA) \Bigg)^{1/3n} \\
 		& =  \Bigg(\prod_{k=0}^{n-1}  d_{3k}(BTA) \prod_{k=0}^{n-1}  d_{3k+1}(BTA)\prod_{k=0}^{n-1}  d_{3k+2}(BTA) \Bigg)^{1/3n} 
 		\,.
 	\end{split}
 	\ee 
 	From the property
 	\beqq
 	d_{m+n+k}(BTA)\leq d_m(B)d_n(T)d_k(A), 
 	\eeqq
see \cite[Page 32]{Pin} or \cite[Theorem 11.9.2]{Pie}, and the monotonicity $d_{k+1}\leq d_k$ of  Kolmogorov widths we conclude that
 	\be \label{pro}
 	\begin{split} 
 		\Bigg(\prod_{k=0}^{3n-1} \sigma_k(\mathcal{K})_X\Bigg)^{1/3n}  
 		&
 		\leq \Bigg(\prod_{k=0}^{n-1}  d_{k}(A)  \Bigg)^{1/n}\Bigg(\prod_{k=0}^{n-1}  d_{k}(T)  \Bigg)^{1/n}  \Bigg(\prod_{k=0}^{n-1}  d_{k}(B) \Bigg)^{1/n}  \,.
 	\end{split}
 	\ee 
 	Lemmas \ref{lem1} and \ref{lem2} yield the estimate
 	\be \label{B}
 	\begin{split} 
 		 	\Bigg(\prod_{k=0}^{n-1}  d_{k}(B) \Bigg)^{1/n} \leq \Gamma_n(B) & \leq en^{-1/2}\| B|\mathcal{B}_2\|\, \Gamma_n(X)\\
 		 	& \leq e n^{-1/2} (3n)^{1/2} \, \Gamma_n(X)=e\sqrt{3}\,\Gamma_n(X) \,.
 	\end{split}
 	\ee 
 	Now we deal with the first product on the right-hand side of \eqref{pro}. We have
 	\beqq
 	\Bigg(\prod_{k=0}^{n-1}  d_{k}(A)  \Bigg)^{1/n} = \Bigg(\prod_{k=0}^{n-1}  d^{k}(A')  \Bigg)^{1/n} \leq \Gamma_n(A')\,,
 	\eeqq
 see Section \ref{sec-2}. Here $A'\in \mathcal{L}(E',\ell_2^{3n})$ is the dual operator of $A$ which is of the form 
 	$
 	A'=\sum_{k=0}^{3n-1} e_k\otimes u_k\,.
 	$ Similar argument as for the operator $B$ we also get $\|A'|\mathcal{B}_2\| \leq \sqrt{3n} $. 
 	Hence we found 
 	\beqq
 	\begin{split} 
 	\Gamma_n(A') & \leq   en^{-1/2}\| A'|\mathcal{B}_2\|\, \Gamma_n(E') \leq e n^{-1/2} (3n)^{1/2} \, \Gamma_n(E')=e\sqrt{3}\,.\Gamma_n(E')
 \end{split}
 	\eeqq
 	which leads to
 	\beqq
 	\Bigg(\prod_{k=0}^{n-1}  d_{k}(A)  \Bigg)^{1/n} \leq e\sqrt{3}\, \Gamma_n(E')= e\sqrt{3}\, \Gamma_n(E)\,.
 	\eeqq
Putting this and \eqref{B} into \eqref{pro} we arrive at
 	\beqq
 	\begin{split} 
 		\Bigg(\prod_{k=0}^{3n-1} \sigma_k(\mathcal{K})_X\Bigg)^{1/3n}  
 		&
 		\leq 3e^2 \Gamma_n(E)\Gamma_n(X)\Bigg(\prod_{k=0}^{n-1}  d_{k}(T)  \Bigg)^{1/n}      \,.
 	\end{split}
 	\eeqq
The proof is complete.
 \end{proof}
 \begin{remark} Note that Theorem \ref{greedy2} still holds true if one replaces Kolmogorov widths by Gelfand widths, i.e.,
 	 	\beqq
 	 	\Bigg(\prod_{k=0}^{3n-1} \sigma_k(\mathcal{K})_X\Bigg)^{1/3n} \leq 3e^2 \Gamma_n(E) \Gamma_n(X) \Bigg(\prod_{k=0}^{n-1}  d^{k}(T)  \Bigg)^{1/n}  ,\qquad n\geq 1.
 	 	\eeqq
 \end{remark}
  We have the following consequence.
  \begin{cor} \label{cor-5} Let $X$ be a Banach space, $H$ be a Hilbert space, and $\mathcal{K}$ be a compact set in $X$. Assume that there exists a compact operator $T\in \mathcal{L}(H,X)$ such that $\mathcal{K}\subset T(B_H)$. Then we have
  	\be \label{k-11}
  	\sigma_{3n-1}(\mathcal{K})_X \leq 3e^2 \Gamma_n(X) \Bigg(\prod_{k=0}^{n-1}  d_{k}(T)  \Bigg)^{1/n}  ,\qquad n\geq 1.
  	\ee 	
  	In addition, if $X=L_p$ for $1\leq p\leq \infty$ and  $d_n(T) \leq C_0 n^{-s}$ for some $s> \big|\frac{1}{2}-\frac{1}{p}\big|$, $(n\geq 1)$, then there exists a constant $C>0$ such that
  	\beqq
  	\sigma_{n}(\mathcal{K})_X \leq C \, n^{|1/p-1/2|-s}     ,\qquad n\geq 1.
  	\eeqq
  \end{cor}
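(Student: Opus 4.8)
The plan is to obtain this Corollary as a direct specialization of Theorem \ref{greedy2} to the Hilbert space $E=\ell_2$, followed by an elementary Stirling estimate in the second part. First I would apply Theorem \ref{greedy2} with $E=\ell_2$, which is reflexive; since $\ell_2$ is a Hilbert space we have $\Gamma_n(\ell_2)=1$ for every $n$, so the conclusion of that theorem reads
\[
\bigg(\prod_{k=0}^{3n-1}\sigma_k(\mathcal{K})_X\bigg)^{1/3n}\leq 3e^2\,\Gamma_n(X)\bigg(\prod_{k=0}^{n-1}d_k(T)\bigg)^{1/n}.
\]
To pass from the geometric mean to $\sigma_{3n-1}(\mathcal{K})_X$ I would use the fact, noted in the Introduction, that the greedy errors $\sigma_k(\mathcal{K})_X$ form a non-increasing sequence. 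Hence $\sigma_{3n-1}(\mathcal{K})_X$ is the smallest of the $3n$ factors on the left, and so it is dominated by their geometric mean. This yields \eqref{k-11} at once.

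For the $L_p$ refinement I would insert into \eqref{k-11} the two available ingredients: the bound $\Gamma_n(L_p)\leq n^{|1/p-1/2|}$ recorded in Section \ref{sec-2}, and the decay hypothesis $d_k(T)\leq C_0 k^{-s}$ for $k\geq1$. Separating off the $k=0$ term, for which $d_0(T)=\|T\|$, I would estimate
\[
\prod_{k=0}^{n-1}d_k(T)\leq \|T\|\,C_0^{\,n-1}\,\big((n-1)!\big)^{-s},
\]
and then take $n$-th roots. The only genuine computation is to show that $\big((n-1)!\big)^{-s/n}$ behaves like a constant multiple of $n^{-s}$: by Stirling's formula $\log\big((n-1)!\big)=n\log n-n+O(\log n)$, so $\big((n-1)!\big)^{1/n}\sim n/e$ and in particular there is an absolute constant $c>0$ with $\big((n-1)!\big)^{1/n}\geq c\,n$ for all $n\geq1$, giving $\big((n-1)!\big)^{-s/n}\leq c^{-s}n^{-s}$. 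Since $\|T\|^{1/n}$ and $C_0^{(n-1)/n}$ are uniformly bounded in $n$, all these factors collapse into a single constant $C$, and multiplying by $\Gamma_n(L_p)\leq n^{|1/p-1/2|}$ produces the asserted bound $C\,3e^2\,n^{|1/p-1/2|-s}$.

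The main obstacle, such as it is, lies entirely in this last Stirling step that converts the geometric mean of a polynomially decaying width sequence into a clean polynomial rate; everything else is a substitution into Theorem \ref{greedy2} combined with the elementary remark that the final term of a monotone sequence is bounded above by its geometric mean. One should also keep the exponent honest: because $s>|1/2-1/p|$, the resulting exponent $|1/p-1/2|-s$ is negative, so the bound is indeed decaying, consistent with the compactness of $T$.
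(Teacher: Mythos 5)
Your proposal is correct and matches the paper's (implicit) argument: the corollary is stated there as an immediate consequence of Theorem \ref{greedy2}, obtained exactly as you do by taking $E=\ell_2$ (reflexive, with $\Gamma_n(\ell_2)=1$), bounding $\sigma_{3n-1}(\mathcal{K})_X$ by the geometric mean via monotonicity of the greedy errors, and then inserting $\Gamma_n(L_p)\leq n^{|1/p-1/2|}$ together with the Stirling estimate $\big((n-1)!\big)^{-s/n}\leq c^{-s}n^{-s}$. Your handling of the $k=0$ term and the uniform boundedness of $\|T\|^{1/n}$ and $C_0^{(n-1)/n}$ is a careful and accurate filling-in of details the paper leaves unstated.
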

\begin{proof}
Since the sequence  $\sigma_{n}(\mathcal{K})_X$ is monotonically non-increasing and $\Gamma_{n}(H)=1$, $n\in \N$, we obtain \eqref{k-11} straightforwardly from Theorem \ref{greedy2}. Now, with $X=L_p$, $1\leq p\leq \infty$, and  $d_n(T) \leq C_0 n^{-s}$ we have for $n\geq 1$
   	\beqq
   	\begin{split}
 \sigma_{3n-1}(\mathcal{K})_X &
  \leq 
  3e^2   n^{|1/p-1/2|} \Bigg( \|T\|\prod_{k=1}^{n-1}  C_0 k^{-s} \Bigg)^{1/n}   
 \\
 &  \leq
 3e^2 \max\big\{C_0,\|T\| \big\}   n^{|1/p-1/2|} \big((n-1)! \big)^{-s/n} \,.
   	\end{split}
 \eeqq
Using Stirling's approximation $(n-1)! \geq  \sqrt{2\pi}(n-1)^{n-1/2}e^{-n+1}$, we arrive at
 \beqq
 	\sigma_{3n-1}(\mathcal{K})_X \leq  C n^{|1/p-1/2|-s}  \,,\qquad n\geq 2,
 \eeqq
with  $C>0$ independent of $n$. From this, the second assertion follows.
\end{proof}
 As a supplement we study the case $\mathcal{K}=T(B_{\ell_2})$ for some compact operator $T\in \mathcal{L}(\ell_2,X)$. In this situation we can replace $ \sigma_{3n-1}(\mathcal{K})_X$  in \eqref{k-11} by $ \sigma_{2n-1}(\mathcal{K})_X$. We have the following.
 \begin{theorem} Let $X$ be a Banach space and $T\in \mathcal{L}(\ell_2,X)$ be a compact operator. Assume that $\mathcal{K}= T(B_{\ell_2})$. Then we have
 	\beqq
 	 	\sigma_{2n-1}(\mathcal{K})_X \leq e\sqrt{2} \Gamma_n(X) \Bigg(\prod_{k=0}^{n-1}  d_{k}(\mathcal{K})_X  \Bigg)^{1/n}  ,\qquad n\geq 1.
 	\eeqq
 \end{theorem}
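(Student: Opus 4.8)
The plan is to follow the scheme of the proof of Theorem \ref{greedy2}, but to use the Hilbert structure of $\ell_2$ in order to eliminate the contribution of the lifting operator $A$. This is exactly what allows the index $3n$ to drop to $2n$ and the constant $3e^2\Gamma_n(E)$ to drop to $e\sqrt{2}$: we pay for the two outer operators $B$ and $T$ only, and the ``cost'' of $A$ becomes a determinant bounded by $1$.

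First I would set up precisely as before. Since $T$ is compact and $\ell_2$ is reflexive, $\mathcal{K}=T(B_{\ell_2})$ is closed, so the greedy algorithm is well defined; running it for $2n$ steps yields $f_0,\dots,f_{2n-1}$ and $V_k=\linspan\{f_0,\dots,f_{k-1}\}$. Choose $e_k\in B_{\ell_2}$ with $Te_k=f_k$, and by Hahn--Banach choose $b_k\in X'$ with $\|b_k\|_{X'}=1$, $\langle V_k,b_k\rangle=0$ and $\langle f_k,b_k\rangle=\sigma_k(\mathcal{K})_X$. Define $B=\sum_{k=0}^{2n-1}b_k\otimes u_k\in\mathcal{L}(X,\ell_2^{2n})$; the computation of the previous proof, with $3n$ replaced by $2n$, gives $\|B|\mathcal{B}_2\|\le\sqrt{2n}$. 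The matrix $M=(\langle f_k,b_j\rangle)_{j,k=0}^{2n-1}$ is triangular with diagonal entries $\sigma_k(\mathcal{K})_X$, hence $\prod_{k=0}^{2n-1}\sigma_k(\mathcal{K})_X=\lvert\det M\rvert$.

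The key new step — and the main obstacle — is to bound $\lvert\det M\rvert$ by the widths of the \emph{two}-fold product $BT\in\mathcal{L}(\ell_2,\ell_2^{2n})$ rather than of the three-fold product $BTA$. Setting $F=\linspan\{e_0,\dots,e_{2n-1}\}\subseteq\ell_2$ and letting $A\colon\ell_2^{2n}\to F$ be $u_k\mapsto e_k$, the matrix $M$ is the matrix of $(BT)|_F\circ A$ in orthonormal bases, so $\lvert\det M\rvert=\lvert\det\bigl((BT)|_F\bigr)\rvert\cdot\lvert\det A\rvert$. Because $\ell_2$ is a Hilbert space and $\|e_k\|_{\ell_2}\le1$, Hadamard's inequality applied to the Gram matrix gives $\lvert\det A\rvert=\sqrt{\det(\langle e_i,e_j\rangle)}\le1$; and since the singular values of a restriction do not exceed those of the full operator, together with the identity between Kolmogorov widths and singular values in the Hilbert setting, $\lvert\det\bigl((BT)|_F\bigr)\rvert=\prod_k s_k\bigl((BT)|_F\bigr)\le\prod_{k=0}^{2n-1}s_k(BT)=\prod_{k=0}^{2n-1}d_k(BT)$. (If some $\sigma_k=0$ the claim is trivial; otherwise the $f_k$, hence the $e_k$, are linearly independent and $\dim F=2n$.) This yields $\prod_{k=0}^{2n-1}\sigma_k(\mathcal{K})_X\le\prod_{k=0}^{2n-1}d_k(BT)$, with no factor depending on $A$.

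Finally I would split the remaining product into two blocks of length $n$. By the product rule for Kolmogorov widths and their monotonicity, $\prod_{k=0}^{2n-1}d_k(BT)=\prod_{k=0}^{n-1}d_{2k}(BT)\,d_{2k+1}(BT)\le\bigl(\prod_{k=0}^{n-1}d_k(B)d_k(T)\bigr)^2$, so that $\bigl(\prod_{k=0}^{2n-1}\sigma_k(\mathcal{K})_X\bigr)^{1/2n}\le\bigl(\prod_{k=0}^{n-1}d_k(B)\bigr)^{1/n}\bigl(\prod_{k=0}^{n-1}d_k(T)\bigr)^{1/n}$. Lemma \ref{lem1} and Lemma \ref{lem2} bound the first factor by $\Gamma_n(B)\le en^{-1/2}\|B|\mathcal{B}_2\|\,\Gamma_n(X)\le e\sqrt{2}\,\Gamma_n(X)$, while the second factor equals $\bigl(\prod_{k=0}^{n-1}d_k(\mathcal{K})_X\bigr)^{1/n}$ because $d_k(T)=d_k(T(B_{\ell_2}))_X=d_k(\mathcal{K})_X$. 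Since $\sigma_k(\mathcal{K})_X$ is non-increasing, $\sigma_{2n-1}(\mathcal{K})_X\le\bigl(\prod_{k=0}^{2n-1}\sigma_k(\mathcal{K})_X\bigr)^{1/2n}$, and the asserted estimate follows. The one delicate point, as noted, is the passage in the third paragraph from the determinant of the triangular matrix to the widths of $BT$: it is precisely the self-duality of $\ell_2$, expressed through $\lvert\det A\rvert\le1$, that removes the $A$-dependent factor present in Theorem \ref{greedy2}.
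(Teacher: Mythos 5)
Your proof is correct, but at the decisive step it takes a genuinely different route from the paper. The shared skeleton is identical: greedy selection, Hahn--Banach functionals $b_k$, the operator $B$ with $\|B|\mathcal{B}_2\|\le\sqrt{2n}$, the triangular-matrix identity $\prod_{k=0}^{2n-1}\sigma_k(\mathcal{K})_X=\lvert\det M\rvert$, the two-block splitting of $\prod_{k=0}^{2n-1}d_k(BT)$, and Lemmas \ref{lem1} and \ref{lem2}. The difference lies in how the lifting operator $A$ is neutralized. The paper exploits the hypothesis $\mathcal{K}=T(B_{\ell_2})$ (equality, not inclusion) together with the extremal property of the greedy selection to show that the preimages $e_k$ can be chosen to form an \emph{orthonormal} system: writing $f_k=Te$ with $e=\sum_j c_je_j$ and comparing $\dist(f_k,V_k)_X$ with $\dist(f_k^*,V_k)_X$ for the renormalized tail $f_k^*$ forces $\|c^*\|_{\ell_2}=1$, hence $e\perp\{e_0,\ldots,e_{k-1}\}$ and $\|e\|_{\ell_2}=1$; this gives $\|A\|\le 1$ and the termwise bound $d_k(BTA)\le\|A\|\,d_k(BT)\le d_k(BT)$. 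You instead keep arbitrary preimages $e_k\in B_{\ell_2}$ and remove the $A$-contribution only at the determinant level, via the factorization $\lvert\det(BTA)\rvert=\lvert\det\bigl((BT)|_F\bigr)\rvert\cdot\lvert\det A\rvert$ in orthonormal bases and the Gram--Hadamard bound $\lvert\det A\rvert=\sqrt{\det(\langle e_i,e_j\rangle)}\le 1$, which uses only $\|e_k\|_{\ell_2}\le 1$; your handling of the degenerate case $\sigma_k=0$ and of $\dim F=2n$ is correct, and $s_k\bigl((BT)|_F\bigr)\le s_k(BT)=d_k(BT)$ is the standard restriction inequality for singular values. Your route is in one respect stronger: since orthonormality is never used, the argument goes through under the weaker hypothesis $\mathcal{K}\subseteq T(B_{\ell_2})$ and yields $\sigma_{2n-1}(\mathcal{K})_X\le e\sqrt{2}\,\Gamma_n(X)\bigl(\prod_{k=0}^{n-1}d_k(T)\bigr)^{1/n}$, i.e., it sharpens Corollary \ref{cor-5} from index $3n-1$ and constant $3e^2$ to $2n-1$ and $e\sqrt{2}$, whereas the paper needs the equality $\mathcal{K}=T(B_{\ell_2})$ precisely to run its orthonormalization (and uses it again, as you do, to identify $d_k(T)=d_k(\mathcal{K})_X$). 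What the paper's construction buys in exchange is a termwise estimate $d_k(BTA)\le d_k(BT)$ for every $k$ rather than only a bound on the full determinant, plus the structural fact that greedy elements of $T(B_{\ell_2})$ admit orthonormal preimages; for the stated theorem, both arguments deliver the same estimate.
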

 \begin{proof} Recall that $T(B_{\ell_2})$ is a closed set in $X$.  For $n\in \N$ fixed, running the greedy algorithm for $\mathcal{K}$ we get $\{f_0,\ldots,f_{2n-1}\}$ and $V_{k}=\linspan\{f_0,\ldots,f_{k-1} \}$. First we  show that we can select $e_k\in B_{\ell_2}$ such that $Te_k=f_k$ for $k=0,\ldots, 2n-1$ and  $\{ e_k\}_{k=0}^{2n-1}$ is an orthonormal system in $\ell_2$. Indeed, if $Te_0=f_0$ with 
 	\beqq
 	\|f_0\|_X =\max_{f\in \mathcal{K}}\|f\|_X =\max_{e\in B_{\ell_2}}\|Te\|_X \,,
 	\eeqq
 then $\|e_0\|_{\ell_2}=1$. Assume that we have chosen the orthonormal system $\{e_0,\ldots,e_{k-1} \}$ in $\ell_2$  with $Te_i=f_i$ for $i=0,\ldots,k-1$. Let $\{e_j \}_{j\geq 0}$ be an orthonormal basis of $\ell_2$ constructed from the system $\{e_0,\ldots,e_{k-1} \}$  and let $f_k=Te$ where $e=\sum_{j\geq0} c_je_j$ with $\|(c_j)_{j\geq 0}\|_{\ell_2}\leq 1$. We consider
 \beqq
 f_k^*=\frac{1}{\|c^*\|_{\ell_2}}\sum_{j\geq k} c_jTe_j,\ \qquad \text{with}\ \ c^*=(0,\ldots,0,c_k,c_{k+1},\ldots)\,.
 \eeqq
Here we assume that $c^*\not =0$ otherwise $\sigma_k(\mathcal{K})_X=0$. We have $f_k^*\in T(B_{\ell_2})$ and
 \beqq
 \begin{split} 
\dist(f_k,V_k)&\geq \dist(f_k^*,V_k)_X\\ &=\inf_{a_0,\ldots,a_{k-1}}\Bigg\|\frac{1}{\|c^*\|_{\ell_2}}\sum_{j\geq k} c_jTe_j - \sum_{j=0}^{k-1}a_j Te_j\Bigg\|_X\\
 & = \frac{1}{\|c^*\|_{\ell_2}}\inf_{a_0,\ldots,a_{k-1}}\Bigg\| \sum_{j\geq 0} c_jTe_j - \sum_{j=0}^{k-1}\big(a_j\|c^*\|_{\ell_2}+c_j\big) Te_j\Bigg\|_X \\
 & = \frac{1}{\|c^*\|_{\ell_2}} \dist(f_k,V_k)\,.
\end{split}
 \eeqq
 Hence $\|c^*\|_{\ell_2}=1$ and $f_k=f_k^*$ which implies $e$ is orthogonal to $\{e_0,\ldots,e_{k-1} \}$ and $\|e\|_{\ell_2}=1$. Similar to \eqref{b} we define the operators $A\in \mathcal{L}(\ell_2^{2n},\ell_2)$ and $B\in \mathcal{L}(X,\ell_2^{2n})$ by
 \beqq
 A:=\sum_{k=0}^{2n-1} u_k\otimes e_k\qquad \text{and} \qquad B:=\sum_{k=0}^{2n-1}b_k\otimes u_k\,.
 \eeqq
 Note that $\|B|\mathcal{B}_2\|\leq \sqrt{2n}$ and $\|A\|\leq 1$\,.  We have	\beqq
 \begin{split} 
 	\Bigg(\prod_{k=0}^{2n-1} \sigma_k(\mathcal{K})_X\Bigg)^{1/2n} & \leq \Bigg(\prod_{k=0}^{2n-1}  d_k(BTA) \Bigg)^{1/2n}\\
 	& \leq \Bigg(\prod_{k=0}^{2n-1}  \|A\| d_k(BT) \Bigg)^{1/2n}\leq \Bigg(\prod_{k=0}^{2n-1}   d_k(BT) \Bigg)^{1/2n}\,,
 \end{split}
 \eeqq
see \eqref{sim}. By the same argument as in the proof of Theorem \ref{greedy2} we obtain the desired estimate. 
 \end{proof}

We now present an example which shows that  Theorem \ref{greedy2} can be applied to get a better convergence rate of greedy algorithm compared to Theorems \ref{thm1} and \ref{main-1}. 
Let $D\subset \R^d$ be a bounded Lipschitz domain. For $1< q< \infty$ and $s\geq 0$ we define the Bessel potential space $H^s_q(\R^d)$ as the collection of all tempered distributions $f$ such that
\beqq
\|f\|_{H^s_q(\R^d)}:= \big\| \mathcal{F}^{-1}\big[(1+|\xi|^2)^{s/2}\mathcal{F} f \big] (\cdot) \big\|_{L_q(\R^d)}<\infty,\qquad \xi \in \R^d.
\eeqq
Therein $\mathcal{F}$ and $\mathcal{F}^{-1}$ denote the Fourier transform and its inversion. The space on domain $H^s_q:=H^s_q(D)$ is then defined by restricting $H^s_q(\R^d)$ onto $D$. For $s\in \N$ we have $H^s_q(D)=W^s_q(D)$, the Sobolev space  of all functions $ f\in L_q(D)$ such that $ \partial^{\alpha}f \in L_q(D)$ for all $\alpha=(\alpha_1,\ldots,\alpha_d) \in \N_0^d$ with $\alpha_1+\ldots+\alpha_d\leq s$.  

\smallskip
We consider the compact embedding $id: 
H^s_q   \to L_p
$ with $1<q<\infty$, $1\leq p\leq \infty$, and $\frac{s}{d}>\max(\frac{1}{2},\frac{1}{q})$. 
The order of Kolmogorov widths of $B_{H^s_q}$ in $L_p$ is well-known:
\be \label{dn}
d_n(B_{H_q^s})_{L_p} \asymp   n^{-\frac{s}{d}+ \max\big\{\frac{1}{q}-\max(\frac{1}{2},\frac{1}{p}),0\big\}}\,,
\ee 
see \cite[Chapter 5]{Lub82} or \cite[Page 115]{Tem93}, where for two sequences $a_n$ and $b_n$,  the notion $a_n \asymp b_n$  indicates that there exist positive constants $C_1$ and $C_2$ such that $C_1a_n \leq  b_n\leq C_2b_n$. Choosing $t$ such that $\frac{t}{d}=\frac{s}{d}-\max(\frac{1}{q}-\frac{1}{2},0)$ we get $B_{H^s_q} \subset B_{H_2^t}$ and $H^t_2$ is compactly embedded into $L_p$ since $\frac{t}{d}> \frac{1}{2}$. Now, due to $d_n(B_{H_2^t})_{L_p} \asymp   n^{-\frac{t}{d}}$, Corollary \ref{cor-5} yields
\beqq
\sigma_n(B_{H^s_q})_{L_p} \leq  C n^{-\frac{t}{d} +|\frac{1}{p}-\frac{1}{2}|} = Cn^{-\frac{s}{d}+ \max(\frac{1}{q}-\frac{1}{2},0)+|\frac{1}{p}-\frac{1}{2}|}\,.
\eeqq
Comparing this estimate with the Kolmogorov widths in \eqref{dn} we find
\beqq
\sigma_n(B_{H^s_q})_{L_p} \leq C' n^{|\frac{1}{p}-\frac{1}{2}|} d_n(B_{H_q^s})_{L_p}
\eeqq
when $\max(p,q)\geq 2$. 
\vskip 2mm

 The following example taken from \cite{DePeVo}, see also \cite{Woj}, indicates that the estimate given in Corollary \ref{cor-5} is optimal. Let $\mathcal{K}=\{n^{-\alpha}u_n \}\subset \ell_p$ with $2<p<\infty$ where $\{u_n\}_{n\geq 1}$ is the canonical basis of $\ell_2$.  It is clear that $\sigma_n(\mathcal{K})_{\ell_p}=\frac{1}{(n+1)^{\alpha}}$. We consider the diagonal operator $D_\alpha:\ell_2\to \ell_p$ defined by $u_n\to n^{-\alpha}u_n$. Then $\mathcal{K}\subset  D_\alpha(B_{\ell_2})$. We know that $d_n(D_{\alpha}) \leq Cn^{-\alpha+\frac{1}{p}-\frac{1}{2}}$, see, e.g., \cite[Section 6.2.5.3]{Pie07} which implies 
 \beqq
 \sigma_{3n-1}(\mathcal{K})_{\ell_p} \leq C n^{|\frac{1}{2}-\frac{1}{p}|}
n^{-\alpha+\frac{1}{p}-\frac{1}{2}}=Cn^{-\alpha}\,.
 \eeqq
Hence the operator $D_\alpha$ give the sharp estimate in the rate of convergence in this example.
\vskip 3mm
\noindent
{\bf Acknowledgments} The author would like to thank Markus Bachmayr for  fruitful discussions. Moreover, the author acknowledges the Hausdorff Center of Mathematics, University of Bonn for financial support. Finally, the author is grateful to the referees for a careful reading and many detailed hints to improve the paper.

\bibliographystyle{amsplain}
\bibliography{Greedy}

\end{document}